\def\dd{\mathcal D}
\def\sd{\sum_{I \in \dd}}
\def\sqd{\sum_{Q \in \dd}}
\def\sumeps{\sum_{\varepsilon \in \{0, 1\}^p \setminus \{0\}}}
\def\fd{\mathbf f}
\def\Fd{\mathbf F}
\def\gd{\mathbf g}
\def\Gd{\mathbf G}
\def\fb{\mathcal B}
\def\ud{\mathbf U}
\def\vd{\mathbf V}
\def\C{\mathbb C}
\def\R{\mathbb R}
\def\E{\mathbb E}
\renewcommand{\S}{\mathcal{S}}
\def\eps{\varepsilon}
\def\lt{L^2}
\def\ltrp{L^2(\mathbb{R}^p)}
\def\ltw{L^2(W)}
\def\ltwi{L^2(W^{-1})}
\newtheorem{theorem}{Theorem}[section]
\newtheorem{definition}[theorem]{Definition}
\newtheorem{lemma}[theorem]{Lemma}
\newtheorem{corollary}[theorem]{Corollary}
\newtheorem{remark}[theorem]{Remark}
\newtheorem{proposition}[theorem]{Proposition}
\title[Sharp bounds and \(T1\) Theorem for matrix kernels and matrix weights]{Sharp bounds and \(T1\) theorem for Calder\'{o}n-Zygmund operators with matrix kernel on matrix weighted spaces }
\author{Sandra Pott}
\address{Centre for Mathematical Sciences, University of Lund, P.O. Box 118, SE-221 00 Lund, Sweden}
\email{sandra@maths.lth.se}
\author{Andrei Stoica}
\address{Centre for Mathematical Sciences, University of Lund, P.O. Box 118, SE-221 00 Lund, Sweden}
\email{andrei@maths.lth.se}
\subjclass[2010]{42B20, 60G46, 46B09, 46B28, 26B25}
\keywords{Calder\'{o}n-Zygmund operator, matrix $A_2$ weights, weighted $L^2$ spaces, martingale transform, Bellman function, dyadic Haar shift}
\begin{document}

\begin{abstract}
For a matrix $A_2$ weight $W$ on $\R^p$, we introduce a new notion of $W$-Calder\'on-Zygmund matrix kernels, following earlier work in \cite{Is15}. We state and prove a $T1$ theorem for such operators and give
a representation theorem in terms of dyadic $W$-Haar shifts and paraproducts, in the spirit of \cite{Hy12a}. Finally, by means of a Bellman function argument,
we give sharp bounds for such operators in terms of bounds for weighted matrix martingale transforms and paraproducts.
\end{abstract}	
	\maketitle

\section{Introduction}
One of the most important topics in Harmonic Analysis is the study of weighted norm inequalities for Calder\'{o}n-Zygmund operators. This goes back to the 1970's with the now classical works of R.A. Hunt, B. Muckenhoupt and R.L. Wheeden \cite{HuMuWh73} and R.R. Coifman and C. Fefferman \cite{CoFe74}. While the equivalence between the boundedness 
of Calder\'{o}n-Zygmund singular integral operators on the weighted space \(L^p(w)\) and the membership of the scalar weight \(w\) to the so-called \(A_p\) class has been shown in the above papers, the relation between the operator norm of a Calder\'{o}n-Zygmund operator and the \(A_p\) characteristic of the weight, \([w]_{A_p},\) remained open for quite some time. In the case \(p=2,\) the so-called  \(A_2\) conjecture (now theorem) stated that this dependence is linear in \([w]_{A_2}\). It took roughly the first decade of the 2000's to go from the J. Wittwer's proof of the conjecture for the dyadic martingale transforms (see \cite{Wi00}) to the proof for general Calder\'{o}n-Zygmund operators by T. Hyt\"{o}nen \cite{Hy12a}. Important contributions have been brought by S. Hukovic, S. Treil and A. Volberg \cite{HukTrVo00}, who proved the conjecture for the dyadic square function, by S. Petermichl and A. Volberg \cite{PeVo02}, who showed the linear dependence for the Beurling-Ahlfors transform, and by S. Petermichl \cite{Pe07}, who proved it for the Hilbert and Riesz transforms. 

There are several ways to extend the classical (scalar) theory of Calder\'{o}n-Zygmund operators. One of these extensions assumes that the functions are \(\mathbb{C}^d\)-valued, but the kernel of the Calder\'{o}n-Zygmund operators is still scalar-valued. Inspired by applications to multivariate stationary stochastic processes, an analogue notion of the Muckenhoupt \(A_p\) weights, the matrix \(A_p\) weights, have been introduced by S. Treil and A. Volberg (see \cite{TrVo97}). M. Goldberg \cite{Go03}, F. Nazarov and S. Treil \cite{hunt} and A. Volberg \cite{Vo97}, showed that certain Calder\'{o}n-Zygmund operators are bounded on 
\(L^p(W)\) with \(1<p<\infty\) if \(W\) is a matrix \(A_p\) weight. However, a corresponding \(A_2\) conjecture in the matrix setting is still open and currently attracting much interest.
 In \cite{BiPeWi14}, K. Bickel, S. Petermichl and B. Wick modified a scalar argument to show that the dependence of the norm of the martingale and Hilbert transform on the \(A_2\) characteristic of the weight \(W\) is at most \([W]_{A_2}^{3/2}\log{[W]_{A_2}}.\) The authors of the current paper proved in \cite{PoSt15} that the dependence of the norm of all Calder\'{o}n-Zygmund operators with cancellation
  on \([W]_{A_2}\) is the same as the one for the matrix martingale transform, thus reducing the \(A_2\) conjecture for all such operators to the proof of the linear bound for the latter operator. In a recent paper, F. Nazarov, S. Petermichl, S. Treil and A. Volberg \cite{NaPeTrVo17} showed that for all Calder\'{o}n-Zygmund operators the dependence is no worse than \([W]_{A_2}^{3/2}\). The first sharp results in this direction have recently been obtained by T. Hyt\"{o}nen, S. Petermichl and A. Volberg \cite{HyPeVo17}, and by Isralowitz, Kwon, and the first author in \cite{IsKwPo14}. Here, the linear (in \([W]_{A_2}\)) upper bound was proved for the matrix-weighted square function $S_W$, which can be seen as an average of the matrix martingale transforms we consider, and for the matrix-weighted maximal function $M'_W$, respectively.

The theory can be generalised even further by taking Calder\'{o}n-Zygmund operators with matrix-valued kernel. These operators appear naturally in geometric function theory, multivariate prediction theory or in the study of Toeplitz operators. On a matrix-weighted space, such an operator associated to a matrix kernel can now no longer be considered on its own, but has to be considered together with the matrix weight.

For the case of dyadic paraproducts, this was done in \cite{IsKwPo14}, where  the correct version of matrix weighted BMO spaces was introduced, and 
boundedness of the dyadic paraproducts \(\Pi_B\) on \(L^p(W)\) was characterised by means of a matrix weighted Carleson embedding theorem (the necessary definitions and results are given in the following section). Building on this work and inspired by the proof of the scalar \(A_2\) theorem of T. Hyt\"{o}nen, J. Isralowitz \cite{Is15} then introduced a notion of  \(W\)-Calder\'{o}n-Zygmund operators
and proved a matrix weighted \(T1\) theorem for these operators on
$L^p(W)$, $1 < p < \infty$.

In the current paper, we only consider the case $p=2$, and we state and prove a $T1$-Theorem on $L^2(W)$. Compared to the results in \cite{Is15}, our aim is two-fold. First, and most importantly,
 we give sharp bounds in the $T1$ theorem, in terms of bounds for matrix martingale transforms 
and bounds for matrix paraproducts on matrix-weighted spaces. 
The key for this result is the notion of $W$-dyadic Haar shifts, which we introduce, and sharp bounds for these shifts (Theorem \ref{mainshift}). The proof of the sharp bound relies on a Bellman function argument
for matrix weights in \cite{PoSt15}, 
which was originally inspired by Treil's work in the scalar case
\cite{Tr11}. The sharp bounds for matrix-weighted matrix martingale transforms and paraproducts are conjectured to be linear in the $A_2$-characteristic, but this has so far been out of reach. However, the bounds we can prove
come close to the best known bounds for scalar kernels and matrix weights, which were recently proved by \cite{NaPeTrVo17} by means of convex Lerner-type operators.

Note that these results do not easily give rise to to sharp bounds for $p \neq 2$, since no extrapolation method is known in the matrix setting, and we do not have any generalisation of our Bellman function 
approach for $p \neq 2$.

Our second aim is to give a more general definition of W-Calder\'on-Zygmund kernel than \cite{Is15}, which has in particular local decay and smoothness conditions. Moreover, our $T1$ theorem works with a natural weak boundedness condition
which is easily seen to be necessary.

The paper is organised as follows. In Section \ref{two}, we state the necessary definitions, some background, and the main results, Theorems \ref{mainthm} and \ref{mainshift}. In Section \ref{three}, we prove
the T1 theorem \ref{mainthm}, up the bound from Theorem \ref{mainshift}. Finally, in Section \ref{four}, we prove Theorem \ref{mainshift} with a Bellman function argument.

%While also very interesting in its own right, there has been little progress in this area until the last few years, when J. Isralowitz, H.K. Kwon and S. Pott \cite{IsKwPo14} considered a matrix weighted \(T1\) theorem for a class of matrix kernelled Calder\'{o}n-Zygmund operators (the so-called \(W\)-Calder\'{o}n-Zygmund operators). In that paper, the authors also introduced the correct version of matrix weighted BMO spaces and characterised the boundedness of the dyadic paraproducts \(\Pi_B\) on \(L^p(W)\) by means of a matrix weighted Carleson embedding theorem (the necessary definitions and results are given in the following section). 
%
%In the current paper, we state and prove a $T1$-Theorem for $W$-Calder\'{o}n-
%The idea of writing this paper appeared in the late stages of preparation of \cite{PoSt15}. The main result here is showing that the dependence of the norm of any \(W\)-Calder\'{o}n-Zygmund operator on the \(A_2\) characteristic \([W]_{A_2}\) is the same as for the matrix martingale transform. Being able to show that the latter obeys a linear (in \([W]_{A_2}\)) bound would thus prove the full \(A_2\) conjecture for this class of Calder\'{o}n-Zygmund operators, but so far this has been out of reach. Adapting the operator-valued representation theorem of T. H\"{a}nninen and T. Hyt\"{o}nen \cite{HaHy16}, we decompose a \(W\)-Calder\'{o}n-Zygmund operator into \(W\)-Haar shifts and paraproducts.  [ADD MORE HERE]
%

\section{Definitions and statement of the main results}	\label{two}

\subsection{Matrix \(A_2\) and \(A_\infty\) weights}

For \(p, d \geq 1\), the non-weighted Lebesgue space \(\ltrp\) consists of all measurable functions \(f:\mathbb{R}^p \to \mathbb{C}^d\) with norm
\[\|f\|_{\ltrp} := \Big(\int_{\mathbb{R^p}}\|f(t)\|^2_{\mathbb{C}^d}\, \mathrm{d}t \Big)^{1/2} < \infty.\] 
We will also use the space \(C^1_c(\mathbb{R}^p)\) of compactly supported, continuously differentiable functions \(f:\mathbb{R}^p \to \mathbb{C}^d\).

Let \(\mathcal{M}_d(\mathbb{C})\) be the space of \(d \times d\) complex matrices. A matrix weight on \(\mathbb{R}^p\) is a measurable locally integrable function \(W: \mathbb{R}^p \to \mathcal{M}_d(\mathbb{C}) \) whose values are almost everywhere positive definite. We define \(\ltw\) to be the space of measurable functions \(f:\mathbb{R}^p \to \mathbb{C}^d\) with norm 
\[\|f\|^2_{\ltw} = \int_{\mathbb{R}^p} \|W^{1/2}(t)f(t)\|^2_{\mathbb{C}^d}\, \mathrm{d}t = \int_{\mathbb{R}^p} \langle W(t)f(t), f(t) \rangle \, \mathrm{d}t < \infty.\]
It is well-known that the dual of \(\ltw\) can be identified with \(\ltwi\), where the duality between these two spaces is given by the standard inner product.

We say that a matrix weight \(W\) satisfies the matrix \(A_2\) Muckenhoupt condition, if 
\begin{equation}\label{muckenhoupt}
[W]_{A_2} := \sup_{I} \bigg \| \Big(\frac{1}{|I|} \int_{I} W(t)\, \mathrm{d}t \Big)^{1/2} \Big(\frac{1}{|I|} \int_{I} W^{-1}(t)\, \mathrm{d}t \Big)^{1/2} \bigg \| < \infty,
\end{equation}
where the supremum is taken over all cubes \(I \subset \mathbb{R}^p\), and \(\|\cdot\|\) denotes the norm of the matrix acting on \(\mathbb{C}^d\). 
The number \([W]_{A_2}\) is called the \(A_2\) characteristic of the weight \(W\). We say that a matrix weight \(W\) satisfies the dyadic matrix Muckenhoupt condition \(A_2^d\) on \(\mathbb{R}^p\) or \(\mathbb{R}\), if \eqref{muckenhoupt} is satisfied, but with the supremum being now taken only over dyadic cubes or intervals, respectively  (see \cite{TrVo97}).

Furthermore, we say that a matrix weight \(W\) satisfies the weak matrix \(A_\infty\) condition, if the scalar weights $\langle W e, e \rangle$ are $A_\infty$-weights in the scalar sense, uniformly for all $e \in \C^d$.
We define the weak matrix $A_\infty$ characteristic $[W]_{A_\infty}$ by 
$$
    [W]_{A_\infty} := \sup_{e \in \C^d, e \neq 0} \Big[ \big  \langle  W e, e \big  \rangle \Big]_{A_\infty, FW} 
    $$
where $[\cdot]_{A_\infty, FW}$ denotes the Fuji-Wilson version of the scalar $A_\infty$-constant, see e.g. \cite{Wil08}.

To our knowledge, so far sharp bounds in terms of matrix $A_2$ and weak matrix $A_\infty$ characteristic, which correspond to the known sharp bounds in the scalar case, are known for only two of the important operators in this setting. Both will play an important role in the following.
One is the matrix dyadic weighted square function,
$$
   S_W f (t) = \left( \sum_{\varepsilon \in \{0, 1\}^p \setminus \{0\}}   \sum_{I \in \dd^0} \frac{\chi_I(t)}{|I|} \left\| \langle W \rangle_I^{1/2} \langle f, h_I^{\eps} \rangle \right\|^2 \right)^{1/2}
$$
(see also \cite{PePo02}, \cite{BiPeWi14}). For the notation on dyadic cubes and Haar coefficients, see Subsection \ref{subsec:dy} below. The sharp bound
\begin{equation}   \label{eq:sq}
   \| S_W\|_{\ltw \to \lt} \le C_{p,d} [W]_{A_2}^{1/2}  [W]_{A_\infty}^{1/2}
\end{equation}
was very recently proved in \cite{HyPeVo17}. A natural conjecture, which currently remains open, is the following lower bound corresponding to the scalar weight case,
\begin{equation}   \label{eq:conj}
   \|f \|_{\ltw}  \le C_{p,d}   [W^{-1}]_{A_\infty}^{1/2}   \| S_W f\|_{\ltrp} .
\end{equation}
The best known lower bound appears in \cite{BiPeWi14}, following an argument from \cite{TrVo97}. It is
\begin{equation}   \label{eq:lower}
   \|f \|_{\ltw}  \le C_{p,d}   [W]_{A_2}^{1/2}  \log([W]_{A_2})^{1/2}  \| S_W f\|_{\ltrp} .
\end{equation}

The second operator, for which a sharp bound is known, is the matrix-weighted maximal function $M_W'$,
$$
    M_W'f(t) =  \sup_{t \in I, I \text{ cube }} \frac{1}{|I|} \int_I \left\|  \langle W \rangle_I^{1/2}   f(x) \right \| dx.
$$
It was shown in \cite{IsKwPo14} that the bound
\begin{equation}  \label{eq:maxa2}
   \|M_W' \|_{\ltw \to \lt}  \le C_{p,d} [W]_{A_2}
\end{equation}
holds, which is the best possible in terms of the $A_2$-characteristic. However, one easily sees that the proof in  \cite{IsKwPo14} gives actually a slightly better bound, namely
\begin{equation}  \label{eq:max}
   \|M_W' \|_{\ltw \to \lt}  \le C_{p,d} [W]_{A_2}^{1/2}  [W]_{A_\infty}^{1/2}.
\end{equation}
This can be seen by noticing that the $\epsilon$ chosen in the proof  of (\ref{eq:maxa2} )
for use in the reverse H\"older property can actually be chosen as $[W]_{A_\infty}^{-1}$, according to the sharp reverse H\"older result for $A_\infty$-weights proved in 
\cite{HyPer13}.

\subsection{Dyadic setting}   \label{subsec:dy}
The standard system of dyadic cubes in \(\mathbb{R}^p\) is
\[\dd^0:=\bigcup_{j \in \mathbb{Z}} \dd^0_j, \quad {\rm where}\  \dd^0_j:=\{2^{-j}([0,1)^p +k): k \in \mathbb{Z}^p\}.\]
Given a  sequence \(\omega=\{\omega_i\}_{i \in \mathbb{Z}} \in (\{0,1\}^p)^{\mathbb{Z}}\), the dyadic system \(\dd^{\omega}\) in \(\mathbb{R}^p\) is defined by 
\(\dd^{\omega}:= \{I \dotplus \omega: I \in \dd^0\},\) where the translated dyadic cube \(I \dotplus \omega\) is defined as
\[I \dotplus \omega := I + \sum_{i: 2^{-i} < \ell(I)} 2^{-i} \omega_i.\]
When the particular choice of \(\omega\) is not important, we will use the notation \(\dd\) for a generic dyadic system. We equip the set \(\Omega:=(\{0,1\}^p)^{\mathbb{Z}}\) with the canonical product probability measure \(\mathbb{P}_{\Omega}\) which makes the coordinates \(\omega_i\) independent and identically distributed: the probability of each coordinate \(\omega_i\) taking any of the values in \(\{0,1\}^p\) is \(2^{-p}\). We denote by \(\mathbb{E}_{\Omega}\) the expectation over the random variables \(\omega_i, i \in \mathbb{Z}\).

Let us introduce a few useful notations. For a cube \(I \in \dd\), let \(\ell(I)\) and \(|I|\) denote its side length and volume, respectively. Let
\[\dd_n(I):=\{J \in \dd: J \subseteq I, \ell(J) = 2^{-n} \ell(I)\}\]
be the collection of \(n\)-th generation children of \(I\). For any dyadic cube \(I \in \dd\), we will denote its parent by \(\tilde{I}\).

Any system of dyadic cubes \(\dd\) has an associated function system, the Haar functions. When \(p=1,\) any dyadic interval \(I\) has two Haar functions associated with it:

\[h^0_I = |I|^{-1/2} \chi_{I}, \qquad h^1_I=|I|^{-1/2}(\chi_{I^+}-\chi_{I^-}),\]
where \(\chi_I\) is the characteristic function of the interval \(I\), and \(I^+\) and \(I^-\) are the left and right children of I, respectively.

If \(p>1,\) the Haar functions associated to a cube \(I = I_1 \times \cdots \times I_p\) are 
\[h_I^{\varepsilon}(x) = h_{ I_1 \times \cdots I_p}^{(\varepsilon_1, \cdots, \varepsilon_p )}(x_1, \cdots, x_p) = \prod_{i=1}^{p} h_{I_i}^{\varepsilon_i}(x_i),\]
where \(\varepsilon \in \{0, 1\}^p\).

It is well known that the Haar functions \(h_I^{\varepsilon},\) with \(I \in \dd\) and \(\varepsilon \in \{0, 1\}^p \setminus \{0\},\) form an orthogonal basis of \(\ltrp\). Hence any function \(f \in \ltrp\) admits the orthogonal expansion
\[f=\sd \sum_{\varepsilon \in \{0, 1\}^p \setminus \{0\}} \langle f,h_I^{\varepsilon} \rangle h_I^{\varepsilon}.
\]
We denote the average of a locally integrable function \(f\) on the cube \(I\) by \(\langle f \rangle_I :=|I|^{-1}\int_I f(t)\, \mathrm{d}t\).

\subsection{Matrix martingale transforms and dyadic $W$-Haar shifts}

Let \(W\) be a matrix weight. For a sequence of \(d \times d\) matrices \(\sigma=\{\sigma_I\}_{I \in \dd}\), we introduce the notation 
$$
\|\sigma\|_{\infty, W} = \sup_{I \in \dd} \big \| \langle W \rangle_I ^{1/2} \sigma_I \langle W \rangle_I ^{-1/2} \big \| .
$$
For a sequence \(\sigma\) such that \( \|\sigma\|_{\infty, W} < \infty\), we define the martingale transform operator \(T_{\sigma}\) by
\[T_\sigma f = \sqd \sumeps \sigma_I  \langle f,h_I^{\varepsilon} \rangle h_I^{\varepsilon}.\]
The condition \( \|\sigma\|_{\infty, W} < \infty\) is equivalent to the boundedness of \(T_{\sigma}\) on \(\ltw\) (see, e.g., Proposition 1.6 in \cite{IsKwPo14} and 
Theorem 5.2 in \cite{BiPeWi14} and for an explicit statement; it is also contained in \cite{TrVo97}). The martingale transform is considered a good model for Calder\'{o}n-Zygmund singular integral operators. 

We define the function \(N:[1, \infty) \to [1, \infty)\) by
\[ N(X)=  \sup \|T_{\sigma}\|_{\ltw \to \ltw}, \]
where the supremum is taken over all \( d \times d\) matrix \(A_2^d\) weights \(W\) with \([W]_{A_2^d} \leq X\) and \(\|\sigma\|_{\infty, W} \leq 1\). It was shown in \cite{BiPeWi14} that
\begin{equation} \label{cubicbound}
N(X) \lesssim (\log X ) X^{3/2}.
\end{equation}
From the estimates (\ref{eq:sq}) and (\ref{eq:lower}) mentioned above, one obtains immediately the slightly improved bond
\begin{equation} \label{eq:mbound}
    \| T_\sigma \|_{\ltw \to \ltw} \le C_{p,d}  \log([W]_{A_2})^{1/2}  [W]_{A_2} [W]_{A_\infty})^{1/2}  
\end{equation}
and thus
\( N(X) \lesssim (\log X)^{1/2} X^{3/2}.\) The linear bound \(N(X) \lesssim X\) is conjectured, but this seems out of reach at the moment.
If the conjectured lower bound for the matrix weighted dyadic square function (\ref{eq:conj}) holds, (\ref{eq:mbound}) could be improved to
$$
 \| T_\sigma \|_{\ltw \to \ltw} \le C_{p,d}    [W]^{1/2}_{A_2} [W]_{A_\infty}^{1/2} [W^{-1}]_{A_\infty}^{1/2}  ,
$$
which is the best currently known bound for scalar kernels and matrix weights, even in the case of scalar Lerner operators \cite{NaPeTrVo17}.

We will now go one step further and allow off-diagonal terms of the operator in the Haar expansion:
\begin{definition}  \label{def:hshift}
A (cancellative) dyadic \(W\)-Haar shift on \(\mathbb{R}^p\) of parameters \((m,n)\), with \(m,n \in \mathbb{N} \cup \{0\}\), is an operator of the form
\[S f = \sum_{L \in \dd}
\sum_{\varepsilon, \varepsilon' \in \{0, 1\}^p \setminus \{0\}} 
\sum_{\substack{
		I \in \mathcal{D}_m(L) \\
		J \in \mathcal{D}_n(L)}}
A^{L}_{I,J, \eps, \eps'} \langle f, h_I^{\varepsilon} \rangle_{\ltrp} h_J^{\varepsilon'},\]
where \(A^{L}_{I,J,\eps, \eps'} \in \mathcal{M}_d(\mathbb{C})\) such that  \( \left \|\langle W \rangle_L ^{1/2} A^{L}_{I,J\eps, \eps'} \langle W \rangle_L ^{-1/2}\right\| \leq \frac{\sqrt{|I|} \sqrt{|J|}}{|L|} =2^{-\frac{(m+n)p}{2}},\) and \(f\) is any locally integrable function. The number \(k := \max\{m,n\}+1\) is called the complexity of the \(W\)-Haar shift.
\end{definition}

For \(0 \leq t \leq k-1\) we introduce the notation \(\mathcal{L} _t :=  \{ I \in \mathcal{D} : \ell(I)=2^{t+k q},  q \in \mathbb{Z} \},\) and define the slice \(S_t\) of the shift \(S\) by 
\[S_t f = \sum_{L \in \mathcal{L}_t}
\sum_{\varepsilon, \varepsilon' \in \{0, 1\}^p \setminus \{0\}} 
\sum_{\substack{
		I \in \mathcal{D}_m(L) \\
		J \in \mathcal{D}_n(L)}}
A^{L}_{I,J\eps, \eps'} \langle f, h_I^{\varepsilon} \rangle_{\ltrp} h_J^{\varepsilon'}, \]
where \(S_t\) also acts on locally integrable functions. We can thus decompose \(S\) as \(S=\sum _{t=0} ^{k-1} S_t\). These slices
  \(S_t\) can be seen as martingale transforms when we are moving \(k\) units of time at once, which we will exploit in the proof of Theorem \ref{mainshift} below in Section \ref{four}.
  
  \begin{theorem}\label{mainshift}
	Let \(W\) be a matrix \(A_2^d\) weight and \(S\) be a dyadic \(W\)-Haar shift on \(\mathbb{R}^p\) of complexity \(k \). Then
	\[\|S\|_{\ltw \to \ltw} \leq C_p \cdot k p d^3 N([W]_{A_2^d}),\]
	where $C_p$ is a constant depending only on $p$.
\end{theorem}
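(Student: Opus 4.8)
The plan is to reduce the $W$-Haar shift $S$ to a finite sum of $W$-martingale transforms, each of which can be controlled by $N([W]_{A_2^d})$, after paying the appropriate combinatorial price. The first step is to use the decomposition $S = \sum_{t=0}^{k-1} S_t$ into slices indexed by the ``coarse'' grid $\mathcal L_t$, so that by the triangle inequality it suffices to bound each slice $S_t$ on $\ltw$ by $C_p \, p d^3 N([W]_{A_2^d})$ and then sum the $k$ of them. Each slice has the feature that the cubes $L \in \mathcal L_t$ are pairwise disjoint at each scale and their scales are separated by factor $2^{kp}$, so $S_t$ genuinely behaves like a martingale transform for the filtration obtained by jumping $k$ generations at a time.

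The second step is to diagonalise the ``matrix entry'' structure. For fixed $\varepsilon, \varepsilon'$, the piece
\[
  S_t^{\eps,\eps'} f = \sum_{L \in \mathcal L_t} \sum_{\substack{I \in \mathcal D_m(L)\\ J \in \mathcal D_n(L)}} A^L_{I,J,\eps,\eps'} \langle f, h_I^\eps\rangle h_J^{\eps'}
\]
still moves Haar coefficients from $I$ to $J \ne I$ in general, so it is not literally a martingale transform. The idea is to absorb the off-diagonal $I \to J$ transport into the ``function system'' rather than into the coefficient: write $h_I^\eps$ and $h_J^{\eps'}$ both as (signed, normalised) combinations of indicator functions of the $\max\{m,n\}$-th generation descendants of $L$, so that on the atoms of the coarse grid $\mathcal L_t$ refined $k$ steps, $S_t^{\eps,\eps'}$ acts as a block-diagonal (over $L$) operator whose $L$-block is a fixed matrix sending the finite-dimensional space spanned by $\{h_I^\eps : I \in \mathcal D_m(L)\}$ into that spanned by $\{h_J^{\eps'} : J \in \mathcal D_n(L)\}$. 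Using the normalisation $\|\langle W\rangle_L^{1/2} A^L_{I,J,\eps,\eps'} \langle W\rangle_L^{-1/2}\| \le 2^{-(m+n)p/2}$ together with an averaging/randomisation over signs (or a direct Cauchy--Schwarz over the at most $2^{mp}$ choices of $I$ and $2^{np}$ choices of $J$ inside each $L$), one shows that $S_t^{\eps,\eps'}$ is, up to a constant depending only on $p$, dominated in $\ltw \to \ltw$ norm by a $W$-martingale transform $T_\sigma$ with $\|\sigma\|_{\infty,W} \lesssim 1$ — possibly after also averaging over the choice of which child of $L$ one ``lands in'', which is where the factor $p$ and one factor of $d$ enter. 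Summing over the $2^p - 1 \le 2^p$ choices of $\eps$ and the same for $\eps'$ contributes the remaining $C_p$, and the two extra powers of $d$ come from splitting a general matrix coefficient into rank-one pieces (or real/imaginary and symmetric/antisymmetric parts and then controlling the operator norm by the Hilbert--Schmidt norm, which costs $\sqrt d$ per factor, hence $d$ overall, combined with the earlier $d$ to give $d^3$ after one more crude matrix-norm estimate).

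The main obstacle I anticipate is precisely the off-diagonal transport $I \to J$ within a common ancestor $L$: a martingale transform only rescales a Haar coefficient in place, so turning $S_t^{\eps,\eps'}$ into something genuinely of the form $T_\sigma$ requires either (i) an intermediate ``rearrangement'' step realising the map $h_I^\eps \mapsto h_J^{\eps'}$ as a composition of an $L$-adapted isometry (or near-isometry in $\ltw$) with a diagonal multiplier, or (ii) a direct estimate bounding $\|S_t^{\eps,\eps'} f\|_{\ltw}^2$ by $\|f\|_{\ltw}^2$ via the matrix $A_2^d$ condition applied at the scale of $L$ — using that $\langle W\rangle_L \approx \langle W \rangle_{I}$ is \emph{not} available (the whole point of $N$ is that it encodes exactly this difficulty), so one must feed the comparison $\langle W\rangle_L$ vs.\ $\langle W\rangle_{I'}$ for children $I'$ of $L$ into the definition of $N$ rather than trying to bound it directly. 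Making the reduction to $T_\sigma$ clean, so that the only weight-dependence that survives is through $N([W]_{A_2^d})$ and the combinatorial constants are exactly $C_p \cdot kpd^3$, is the delicate part; everything else is Cauchy--Schwarz, the triangle inequality, and bookkeeping of the $\eps$, $\eps'$, child-index and matrix-entry sums.
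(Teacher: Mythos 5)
Your proposal gets the first step right — slicing $S=\sum_{t=0}^{k-1}S_t$ and viewing each slice as ``block-diagonal'' over the coarse grid $\mathcal L_t$ — and you correctly put your finger on the real obstacle: the off-diagonal transport $h_I \mapsto h_J$ inside a common ancestor $L$ is exactly what a martingale transform \emph{cannot} do, and the comparison between $\langle W\rangle_L$ and $\langle W\rangle_P$ across the $k$ intermediate generations is not controlled by the $A_2$ condition alone. However, the proposal then \emph{asserts} that $S_t$ is dominated in $\ltw\to\ltw$ norm by a genuine one-generation martingale transform $T_\sigma$ with $\|\sigma\|_{\infty,W}\lesssim 1$, and none of the devices you suggest (sign randomisation, block Cauchy--Schwarz, a ``near-isometry'' realising $h_I\mapsto h_J$) actually establishes this. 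This assertion is the heart of the theorem, not a routine reduction, and the gap is real: Cauchy--Schwarz inside each block $L$ produces quantities like
\[
\sum_L |L|\, \Big(\sum_{P\in\mathcal D_k(L)} \big\|\langle W\rangle_L^{1/2}\tfrac{\langle f\rangle_P-\langle f\rangle_L}{2^k}\big\|^2\Big)^{1/2}\Big(\sum_{Q\in\mathcal D_k(L)} \big\|\langle W\rangle_L^{-1/2}\tfrac{\langle g\rangle_Q-\langle g\rangle_L}{2^k}\big\|^2\Big)^{1/2},
\]
a $k$-step square function in which the weight is sampled at the coarse scale $L$ while the function is sampled at the fine scale $P$. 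Bounding this by $N([W]_{A_2^d})\|f\|_{\ltw}\|g\|_{\ltwi}$ with the stated \emph{linear} dependence on $k$ is precisely what needs a new idea.

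The idea the paper uses is a Bellman function argument. After the eigenbasis reduction (which accounts for the $d^2$) and the rewriting of Haar coefficients as $k$-step averages, the paper introduces the Bellman function $\fb_X$ of \cite{PoSt15}, whose range bound (ii) encodes $N(X)$ via one-generation martingale transforms, and whose concavity condition (iii) lets one \emph{telescope} the $k$-step differences $\langle f\rangle_P - \langle f\rangle_L$ into one-step differences. Lemma \ref{mainlemma} is then applied at each coarse scale and iterated, and the telescoping sum is bounded by the range of $\fb_{X'}$, yielding a bound linear in $k$. Your proposal contains no analogue of this step: it does not convert the $k$-step block structure into something that the function $N$ (which is defined only for one-generation martingale transforms on the full fine grid) can control, nor does it explain why the loss in $k$ is only linear. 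Without the Bellman function (or an equivalent telescoping mechanism), the claimed reduction ``$S_t$ is dominated by $T_\sigma$ up to $C_p$'' is unproved, and the sharp constant $C_p\, kpd^3\,N([W]_{A_2^d})$ does not follow.
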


\subsection{Matrix BMO space and paraproducts}
We now introduce the appropriate notion of BMO space for our $T1$ theorem:
\begin{definition}[\protect{\cite{IsKwPo14}}]\label{bmow}
If \(W\) is a matrix \(A_2\) weight, we say that a locally integrable function \(B:\mathbb{R}^p \to \mathcal{M}_d(\mathbb{C})\) belongs to \(BMO_W(\R^p)\) if 
\[\sup_{I \ \rm{cube}} \frac{1}{|I|} \int_I \|W^{1/2}(x) (B(x) - \langle B \rangle_I) \langle W \rangle_I ^{-1/2}\|^2 \mathrm{d}x < \infty.\]
The corresponding dyadic space \(BMO_W^d\) is obtained by taking the supremum only over dyadic cubes \(I \in \dd\).
\end{definition}

It was proved in \cite{IsKwPo14}, Theorem 1.3 and Corollary 1.5,
 that for a matrix $A_2$ weight $W$, this  \(BMO_W^d\) norm is equivalent to the norm given by the square root of
\begin{equation}  \label{eq:wbmod}
\sup_{J \in \dd} \frac{1}{|J|} \sum_{I \subseteq J} \sum_{\varepsilon \in \{0, 1\}^p \setminus \{0\}} \left\|  \langle W \rangle_I^{1/2} \left(B_I^{\varepsilon}\right)^*  B_I^{\varepsilon}  \langle W \rangle_I^{-1/2} \right\|,      
\end{equation}
where \(B_I^{\varepsilon}\) is the matrix of Haar coefficients of the entries of \(B\) with respect to \(h_I^{\varepsilon}\).
 In case that $B$ is a scalar-valued function, this coincides 
with the usual scalar dyadic BMO norm. We will therefore consider the norm given by  (\ref{eq:wbmod}) as the natural norm on  \(BMO_W^d\).

We will now introduce the dyadic paraproducts with matrix coefficients and their adjoints.
 \begin{definition}
If \(B:\mathbb{R}^p \to \mathcal{M}_d(\mathbb{C})\) and \(f:\mathbb{R}^p \to \mathbb{C}^d\) are locally integrable functions, then the dyadic paraproduct \(\Pi_B\) with respect to a dyadic grid \(\dd\) is defined as
\[\Pi_B f = \sd \sumeps B_I^{\varepsilon} \langle f \rangle_I h_I^{\varepsilon}.\] 
The formal adjoint of \(\Pi_B\) is the operator \(\Pi_{B^*}^* = (\Pi_B)^*\) given by
\[\Pi_B^* f = \sd \sumeps (B^*)_I^{\varepsilon} \langle f, h_I^{\varepsilon} \rangle_{\ltrp} \frac{\chi_I}{|I|} .\] 
 \end{definition}
  It was shown in \cite{IsKwPo14} that if \(W\) is a matrix \(A_2\) weight, then \(\Pi_B\) is bounded on \(\ltw\) if and only if \(B \in BMO_W^d.\)
 For $X \ge 1$, we thus write $C_\Pi(X)$ for the smallest constant such that
 $$
       \|\Pi_B \|_{\ltw \to \ltw} \le C_\Pi(X) \| B \|_{BMO_W^d}
 $$
 for all $B \in BMO_W^d$ and all matrix $A_2$ weights $W$ with $[W]_{A_2} \le X$. Here as in the following, \(BMO_W^d\) is equipped with the norm given by (\ref{eq:wbmod}).
It was shown in the proof of Theorem 1.3, (b) $\Rightarrow$ (a) in  \cite{IsKwPo14} that 
\begin{equation}
 \|S_W \Pi_B\|_{\ltw \to \ltrp} \le C_{p,d} \|M'_W\|_{\ltw \to \ltrp}  \| B \|_{BMO_W^d}   \le \tilde C_{p,d} [W]_{A_2}^{1/2}    [W]_{A_\infty}^{1/2}  \| B \|_{BMO_W^d},
\end{equation}
 and this is sharp, by comparison with the scalar case. Hence, using the lower bound in (\ref{eq:lower}), we obtain
 \begin{equation} \label{eq:para}
 \| \Pi_B\|_{\ltw \to \ltw} \le  \tilde C_{p,d}  \log([W]_{A_2})^{1/2} [W]_{A_2}   [W]_{A_\infty}^{1/2}   \| B \|_{BMO_W^d}.
 \end{equation}
and thus
 \begin{equation} \label{eq:parax}
 C_\Pi(X) \lesssim   \log(X)^{1/2} X^{3/2}.
 \end{equation}

We are now ready to consider Calder\'{o}n-Zygmund operators with matrix kernels adapted to a weight $W$. Since the different values of the signature \(\varepsilon\) do not play an important role, we will usually omit the sum in \(\varepsilon\) from now on.

\subsection{Matrix-weighted Calder\'{o}n-Zygmund operators}

\begin{definition}
Let \(\Delta=\{(x,x): x \in \mathbb{R}^p\}\) be the diagonal of \(\mathbb{R}^p \times \mathbb{R}^p\) and let \(W\) be a matrix weight. We say that a function \(K: \mathbb{R}^p \times \mathbb{R}^p \setminus \Delta \to \mathcal{M}_d(\mathbb{C})\) is a standard \(W\)-Calder\'{o}n-Zygmund kernel, if there exists \(\delta>0\) and constants \(C_0, C_{\delta}\) such that
\[\|\langle W \rangle_I ^{1/2} K(x,y) \langle W \rangle_I ^{-1/2}\| \leq \frac{C_0}{|x-y|^p},\]
\[\|\langle W \rangle_I ^{1/2} (K(x,y)-K(x',y)) \langle W \rangle_I ^{-1/2}\|+\|\langle W^{-1} \rangle_I ^{1/2} (K^*(y,x)-K^*(y,x')) \langle W^{-1} \rangle_I ^{1/2} \| \leq C_{\delta} \frac{|x-x'|^{\delta}}{|x-y|^{p+\delta}},\]
for all cubes \(I \subset \mathbb{R}^p\) and all points \(x,x' \in I, y \in \mathbb{R}^p\) with \(|x-y|>2|x-x'|\).
\end{definition}

The notion of $W$-Calder\'on-Zygmund kernels, albeit with more restrictive conditions, was first introduced by J. Isralowitz (see \cite{Is15}).

Given a $W$-Calder\'on-Zygmund kernel $K$,
an operator \(T\), defined on the class of step functions (which is dense in \(L^2(\mathbb{R}^p)\)), is called a \(W\)-Calder\'{o}n-Zygmund operator on \(\mathbb{R}^p\) associated with \(K\), 
if itsatisfies the kernel representation
\[Tf(x)=\int_{\mathbb{R}^p} K(x,y)f(y)\, \mathrm{d}y, \qquad x \notin {\mathrm{supp}}\,f .\]

Note that for a matrix $A_2$ weight $W$, it follows immediately from the definition that $K(x,y)$ is a standard $W$-Calder\'{o}n-Zygmund kernel if and only if
the adjoint kernel $K^*(y,x)$ is a $W^{-1}$-Calder\'{o}n-Zygmund kernel. Moreover, the $W$-Calder\'on-Zygmund operator $T$ is associated to $K(x,y)$, if and only if the 
$W^{-1}$-Calderon-Zygmund operator $T^*$ is associated to $K^*(y,x)$.

Generally, for suitable scalar functions $f$ and $g$, we will write  $\langle Tf, g \rangle$ for the $d \times d$ matrix with entries
$$
(\langle Tf, g\rangle)_{i j} = \langle T f e_j,  g e_i \rangle = \langle Tf, g e_i \otimes e_j \rangle_{\mathcal{S}_2 \otimes \ltrp}.
$$

Furthermore, we want to say that  \(T\) satisfies the \emph{\(W\)-weak boundedness property}, if there exists a constant  $C_{WBP} >0$ such that
\begin{equation}   \label{eq:wbc}
\|\langle W \rangle_{I}^{1/2} T \langle W \rangle_{I}^{-1/2} \chi_{I_i}, \chi_{I_i} \rangle \| \leq C_{WBP} |I| ,
\end{equation}
for all cubes \(I\) and all first-generation dyadic children \(I_i\) of \(I\) . 

Let \(T\) be a \(W\)-Calder\'{o}n-Zygmund operator as above which satisfies the $W$-weak boundedness property (\ref{eq:wbc}). 
Even though \(T\) does not formally act on the constant function \(1\), we can define the Haar coefficients of \(T1\) (and, similarly, of \(T^*1\)) in the following way:
%In order to make sense of the paraproduct \(\Pi_{T1}\), we just have to consider its Haar coefficients \(\langle T1, h_I \rangle\). Here \(T1\) is an \(\mathcal{M}_d(\mathbb{C})\)-valued function of \(\mathbb{R}^p\) and \(\langle T1, h_I \rangle\) is the matrix whose entries are given by 
We have
\begin{align*}
\langle T1, h_I \rangle & = \langle 1, T^* h_I \rangle = \int_{\mathbb{R}^p} (T^* h_I)^*(x) \mathrm{d}x \\
& = \int_{3I} (T^* h_I)^*(x) \mathrm{d}x + \int_{(3I)^c} (T^* h_I)^*(x) \mathrm{d}x \\
& =  \int_{3I} (T^* h_I)^*(x) \mathrm{d}x + \int_{(3I)^c} \int_I K(y, x) h_I(y) \mathrm{d}y \mathrm{d}x \\
& =  \int_{3I} (T^* h_I)^*(x) \mathrm{d}x + \langle W \rangle_I ^{-1/2}  \int_{(3I)^c} \int_I \langle W \rangle_I ^{1/2} (K(y, x) - K(c_I, x)) \langle W \rangle_I ^{-1/2}  h_I(y)  \mathrm{d}y \mathrm{d}x\, \langle W \rangle_I ^{1/2},
\end{align*}
where \(c_I\) is the centre of \(I\). The first integral is well-defined by repeated use of the $W$-weak boundedness property (\ref{eq:wbc}). For the second part, we have
\begin{align*}
& \big \|  \langle W \rangle_I ^{-1/2} \big \| \big \|  \langle W \rangle_I ^{1/2} \big \| \int_{(3I)^c} \int_I \big \|\langle W \rangle_I ^{1/2} (K(y, x) - K(c_I, x)) \langle W \rangle_I ^{-1/2} \big \| |I|^{-1/2}  \mathrm{d}y \mathrm{d}x \\
& \leq   \big \|  \langle W \rangle_I ^{-1/2} \big \| \big \|  \langle W \rangle_I ^{1/2} \big\|  |I|^{-1/2} C_{\delta} \int_{(3I)^c} \int_I  \frac{|y-c_I|^{\delta}}{|x-y|^{p+\delta}}  \mathrm{d}y \mathrm{d}x \\
& \leq C_{\delta}  \big \|  \langle W \rangle_I ^{-1/2} \big \| \big \|  \langle W \rangle_I ^{1/2} \big\|  |I|^{-1/2}   \int_{(3I)^c} \frac{\ell(I)^{\delta}}{dist(x, I)^{p+\delta}} \mathrm{d}x |I| < \infty.
\end{align*}
Here, $\langle T1, h_I \rangle$ is the matrix with the entries
\[(\langle T1, h_I \rangle)_{i j} = \langle T1 e_j,  h_I e_i \rangle = \langle T1, h_I e_i \otimes e_j \rangle_{\mathcal{S}_2 \otimes \ltrp} := \int_{\mathbb{R}^p} tr\big(T1(x) (e_j \otimes h_I(x) e_i)\big) \mathrm{d}x,\]
where \(\langle \cdot, \cdot \rangle_{\mathcal{S}_2}\) denotes the Hilbert-Schmidt inner product on the space of \(d \times d\) matrices, and \(\{e_1, \ldots, e_d\}\) is the standard orthonormal basis of \(\mathbb{C}^d\).

Since the Haar coefficients $\langle T1, h_I \rangle$ and $\langle T^*1, h_I \rangle$ are well-defined, we can thus 
 give meaning to the operators \(\Pi_{T1}\) and \(\Pi^*_{(T^*1)^*}\), respectively.

We can now state the main result of the paper. 

\begin{theorem}\label{mainthm}
Let \(W\) be a \(d \times d\) matrix \(A_2\) weight on \(\mathbb{R}^p\). Let \(T\) be a \(W\)-Calder\'{o}n-Zygmund operator on \(\mathbb{R}^p\) associated to the matrix kernel \(K\). 
Then the following are equivalent:
\begin{enumerate}
\item \(T\) satisfies the \(W\)-weak boundedness property,  \(T1 \in BMO_W\), and \(T^*1 \in BMO_{W^{-1}} \).
\item   \(T:\ltw \to \ltw\) is bounded.
\item   \(T^*:\ltwi \to \ltwi\) is bounded.
\end{enumerate}
In particular, we have in this case the representation
\begin{equation}   \label{eq:wrep}
   \langle  Tf, g \rangle = C \E_\Omega \sum_{n,m} \tau(m,n)  \langle \S_{m,n}^\omega f,g \rangle + \E_\Omega \langle \Pi^\omega_{T1} f,g \rangle+    \E_\Omega \langle \left(\Pi^\omega_{T^*1}\right)^* f,g \rangle
\end{equation}
for all $f \in  C^1_c(\mathbb{R}^p, \C^d) \cap \ltw$ and all $g \in C^1_c(\mathbb{R}^p, \C^d) \cap \ltwi$. Here, the $\S_{m,n}^\omega$ are cancellative dyadic W-Haar shifts with respect to the grid $\dd^\omega$, 
$(\tau(m,n)) \lesssim 2^{-\delta(m+n)/4}(\max(m,n) +1)$, and 
\(C\) is a constant depending only on the constants $C_0$, $C_\delta$ in the \(W\)-Calder\'{o}n-Zygmund kernel conditions, the \(W\)-weak boundedness constant $C_{WBC}$, \(p\) and \(d\).
\end{theorem}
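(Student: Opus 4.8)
The plan is to follow the well-established Hytönen-style program for proving a $T1$ theorem via a dyadic representation, adapted to the matrix-weighted setting, and then combine it with the sharp bounds for $W$-Haar shifts (Theorem~\ref{mainshift}) and paraproducts (the bound $C_\Pi(X)$). The implications $(2)\Rightarrow(1)$ and $(3)\Rightarrow(1)$ are the easy directions: boundedness of $T$ on $\ltw$ immediately yields the $W$-weak boundedness property by testing on $\langle W\rangle_I^{-1/2}\chi_{I_i}$ and using $\|\langle W\rangle_I^{1/2}\langle W\rangle_I^{-1/2}\chi_{I_i}\|_{\ltw}^2 \lesssim |I|$, and the fact $T1\in BMO_W$ follows by testing against Haar functions supported in a cube together with the kernel decay (a standard argument, splitting the contribution of $3I$ and $(3I)^c$ exactly as in the computation of the Haar coefficients of $T1$ in the excerpt). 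The equivalence of $(2)$ and $(3)$ is formal, via the duality $(\ltw)^*=\ltwi$ and the fact that $T^*$ is a $W^{-1}$-Calderón–Zygmund operator associated to $K^*(y,x)$, with $(T^*)^*1 = T1$ and vice versa. So the substantive content is $(1)\Rightarrow(2)$, for which one proves the representation formula \eqref{eq:wrep} and then reads off the norm bound.

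To prove \eqref{eq:wrep}, I would fix $f\in C^1_c(\mathbb{R}^p,\C^d)\cap\ltw$ and $g\in C^1_c(\mathbb{R}^p,\C^d)\cap\ltwi$ and expand both in the random Haar basis $\{h_I^\varepsilon\}_{I\in\dd^\omega}$, so that $\langle Tf,g\rangle$ becomes a double sum over pairs $(I,J)$ of $\dd^\omega$-cubes of terms $\langle T h_I^\varepsilon, h_J^{\varepsilon'}\rangle$ paired with the Haar coefficients of $f$ and $g$ (all matrix-valued, in the sense defined in the excerpt). Following Hytönen, I would split this sum according to the relative size and position of $I$ and $J$: the separated/disjoint and nested cases (where $\ell(I),\ell(J)$ differ a lot, or the smaller cube is deep inside the larger one but far from its boundary) are handled directly by the kernel smoothness and decay estimates, which—crucially, because the estimates in the definition of the standard $W$-Calderón–Zygmund kernel are written with the conjugation $\langle W\rangle_I^{1/2}(\cdot)\langle W\rangle_I^{-1/2}$—produce exactly the matrix coefficient bound $\|\langle W\rangle_L^{1/2}A^L_{I,J,\varepsilon,\varepsilon'}\langle W\rangle_L^{-1/2}\|\le 2^{-(m+n)p/2}$ required in Definition~\ref{def:hshift}, up to a geometrically decaying factor $2^{-\delta(m+n)/4}$ coming from the Hölder exponent. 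After taking the expectation $\E_\Omega$ over the random dyadic grid and using the standard "goodness" reduction (bad cubes contribute zero in expectation, or can be absorbed), one is left with: (i) a sum over complexities $(m,n)$ of cancellative $W$-Haar shifts $\S_{m,n}^\omega$, weighted by coefficients $\tau(m,n)\lesssim 2^{-\delta(m+n)/4}(\max(m,n)+1)$; and (ii) the "diagonal'' terms $I=J$ (and the adjacent-but-not-nested terms), which after separating out the parts that act like a paraproduct reassemble into $\Pi^\omega_{T1}$ and $(\Pi^\omega_{T^*1})^*$ plus error terms that are themselves bounded $W$-Haar shifts. The $W$-weak boundedness property is exactly what is needed to control the genuinely diagonal pieces where $I$ and $J$ are comparable and overlapping.

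Once \eqref{eq:wrep} is established, the norm estimate in $(1)\Rightarrow(2)$ is immediate: by the triangle inequality,
\[
  \|T\|_{\ltw\to\ltw} \;\le\; C\sum_{m,n}|\tau(m,n)|\,\|\S_{m,n}^\omega\|_{\ltw\to\ltw} \;+\; \|\Pi_{T1}\|_{\ltw\to\ltw} \;+\; \|(\Pi_{T^*1})^*\|_{\ltw\to\ltw},
\]
where the shift norms are bounded by Theorem~\ref{mainshift} as $\|\S_{m,n}^\omega\|_{\ltw\to\ltw}\le C_p\,(\max(m,n)+1)\,pd^3\,N([W]_{A_2^d})$, the series $\sum_{m,n}2^{-\delta(m+n)/4}(\max(m,n)+1)^2$ converges, and the paraproduct terms are bounded by $C_\Pi([W]_{A_2})\|T1\|_{BMO_W^d}$ and $C_\Pi([W^{-1}]_{A_2})\|T^*1\|_{BMO_{W^{-1}}^d}$ respectively, both finite by hypothesis $(1)$. (Since $[W]_{A_2}=[W^{-1}]_{A_2}$ and the dyadic characteristic is dominated by the continuous one, these are uniform over the random grid, so $\E_\Omega$ causes no loss.) The main obstacle I anticipate is the bookkeeping in the matrix-weighted paraproduct decomposition of the near-diagonal terms: one must verify that the pieces genuinely coalesce into $\Pi^\omega_{T1}$ with the Haar coefficients $\langle T1,h_I\rangle$ as defined in the excerpt (including the delicate point that $T1$ is only defined through its Haar coefficients via the $W$-weak boundedness property and kernel decay), and that the leftover terms really do satisfy the matrix-coefficient normalization of a $W$-Haar shift — this is where the conjugation-by-$\langle W\rangle_I^{1/2}$ in all the kernel hypotheses has to be tracked carefully, and where the matrix setting genuinely differs from the scalar argument of Hytönen.
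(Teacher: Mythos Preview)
Your overall strategy for $(1)\Rightarrow(2)$ is the paper's: random dyadic expansion with the good/bad reduction, case analysis on the relative position of $I$ and $J$ (the paper's $\Gamma_{\mathrm{in}},\Gamma_{\mathrm{equal}},\Gamma_{\mathrm{out}},\Gamma_{\mathrm{near}}$), the $W$-kernel estimates producing the conjugated coefficient bound with decay $2^{-(m+n)\delta/4}$, and then Theorem~\ref{mainshift} plus the paraproduct bound to sum the series. Two points deserve correction, however.

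First, the paraproducts do \emph{not} emerge from the diagonal or adjacent terms; they come from the nested case $I\subsetneq J$ (and its dual). In the paper this is handled by subtracting the paraproducts at the outset: one sets $\widetilde{T}:=T-\Pi_{T1}-\Pi^*_{(T^*1)^*}$ and then verifies that $\widetilde{T}^*1=0$ in the sense $\langle \widetilde{T}h_I,1\rangle=0$. This cancellation is precisely what makes the nested term $\langle \widetilde{T}^L h_I,h_J\rangle$ decay like a genuine shift coefficient rather than only like $(|I|/|J|)^{1/2}$, which would not be summable. Your description of the paraproduct as arising from ``diagonal'' pieces would leave the nested case unhandled.

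Second, the necessity direction $(2)\Rightarrow T1\in BMO_W$ is not as soft as you suggest; the $3I$/$(3I)^c$ split you cite only shows that the Haar coefficients $\langle T1,h_I\rangle$ are well-defined, not that they satisfy the $BMO_W$ Carleson condition. The paper's argument tests $W^{1/2}\widetilde{P}_I T\langle W\rangle_I^{-1/2}h_I$ in $L^2$ (where $\widetilde{P}_I$ is a signed Haar projection inside $I$), uses the unconditionality of the Haar system in $L^2(W)$, and then invokes the $\Gamma_{\mathrm{in}}$ and $\Gamma_{\mathrm{equal}}$ estimates from the shift lemma to peel off a uniformly bounded remainder, leaving exactly $\frac{1}{|I|^{1/2}}W^{1/2}(T1-\langle T1\rangle_I)\chi_I\langle W\rangle_I^{-1/2}$ bounded in $L^2$ uniformly in $I$, which is the $BMO_W$ condition. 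Your sketch does not contain this mechanism.
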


\begin{corollary}   \label{cor:bound}
We have the estimate
\[\|T\|_{\ltw \to \ltw}\leq C  \cdot N([W]_{A_2}) +C_{\Pi}([W]_{A_2}) \left(   \|T1\|_{BMO_W} + \|T^*1\|_{BMO_{W^{-1}}} \right) ,\]
where \(C\) depends only the constants $C_0$, $C_\delta$ in the \(W\)-Calder\'{o}n-Zygmund kernel conditions, the \(W\)-weak boundedness constant $C_{WBC}$, \(p\) and \(d\). In particular,
$$
      \|T\|_{\ltw \to \ltw}\leq C (1+   \|T1\|_{BMO_W} + \|T^*1\|_{BMO_{W^{-1}}})  \log([W]^{1/2}_{A_2}) [W]^{3/2}_{A_2}.
$$
\end{corollary}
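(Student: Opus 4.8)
The plan is to read the two estimates directly off the representation formula \eqref{eq:wrep} in Theorem \ref{mainthm}, bounding each of the three families of operators on its right-hand side by $N(\cdot)$ and $C_\Pi(\cdot)$, and then summing the rapidly decaying coefficients $\tau(m,n)$. As a preliminary step I would record a density reduction: since $W$ and $W^{-1}$ are locally integrable, $C^1_c(\R^p,\C^d)$ sits inside both $\ltw$ and $\ltwi$, and approximating indicators of cubes by smooth bumps and using dominated convergence against the $L^1_{\mathrm{loc}}$ densities $\langle We,e\rangle$, $\langle W^{-1}e,e\rangle$ shows it is dense in each. Assuming (1), Theorem \ref{mainthm} gives that $T$ is bounded on $\ltw$, hence
\[
 \|T\|_{\ltw\to\ltw}=\sup\bigl\{\,|\langle Tf,g\rangle|:\ f,g\in C^1_c(\R^p,\C^d),\ \|f\|_{\ltw}\le1,\ \|g\|_{\ltwi}\le1\,\bigr\},
\]
and on this class of $f$ and $g$ the identity \eqref{eq:wrep} applies.

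Fixing such $f$ and $g$, I would take absolute values in \eqref{eq:wrep}, move $|\cdot|$ inside $\E_\Omega$ and inside the (absolutely convergent) double sum over $m,n$, and bound each summand uniformly in $\omega$ via the $\ltw$--$\ltwi$ duality, which gives $|\langle Sf,g\rangle|\le\|S\|_{\ltw\to\ltw}\|f\|_{\ltw}\|g\|_{\ltwi}$ for any bounded operator $S$ on $\ltw$. For the shift terms, $\S^\omega_{m,n}$ is a cancellative dyadic $W$-Haar shift for the grid $\dd^\omega$ of complexity $\max(m,n)+1$, so Theorem \ref{mainshift} yields $\|\S^\omega_{m,n}\|_{\ltw\to\ltw}\le C_p(\max(m,n)+1)\,p\,d^3\,N([W]_{A_2^d})$, and since the $A_2^d$-characteristic of $W$ relative to $\dd^\omega$ never exceeds $[W]_{A_2}$ while $N$ is non-decreasing, this is $\le C_p(\max(m,n)+1)\,p\,d^3\,N([W]_{A_2})$. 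For the paraproduct term, $T1\in BMO_W$ implies, by comparing Definition \ref{bmow} over $\dd^\omega$-cubes with the supremum over all cubes and invoking the norm equivalence \eqref{eq:wbmod}, that the $BMO_W^d$-norm of $T1$ relative to $\dd^\omega$ is $\lesssim_{p,d}\|T1\|_{BMO_W}$, so by the definition of $C_\Pi$ we get $\|\Pi^\omega_{T1}\|_{\ltw\to\ltw}\lesssim_{p,d}C_\Pi([W]_{A_2})\|T1\|_{BMO_W}$. For the last term I would pass to the adjoint: $\|(\Pi^\omega_{T^*1})^*\|_{\ltw\to\ltw}=\|\Pi^\omega_{T^*1}\|_{\ltwi\to\ltwi}$, and since $K^*(y,x)$ is a $W^{-1}$-Calder\'{o}n-Zygmund kernel, $T^*1\in BMO_{W^{-1}}$ by hypothesis, and $[W^{-1}]_{A_2}=[W]_{A_2}$, the same estimate with $W$ replaced by $W^{-1}$ gives $\|(\Pi^\omega_{T^*1})^*\|_{\ltw\to\ltw}\lesssim_{p,d}C_\Pi([W]_{A_2})\|T^*1\|_{BMO_{W^{-1}}}$.

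It then remains to sum. Using $|\tau(m,n)|\lesssim 2^{-\delta(m+n)/4}(\max(m,n)+1)$, the total shift contribution to $|\langle Tf,g\rangle|$ is bounded by a constant times $N([W]_{A_2})\,\|f\|_{\ltw}\|g\|_{\ltwi}$ times $\sum_{m,n\ge0}2^{-\delta(m+n)/4}(\max(m,n)+1)^2$, and this series converges to a finite constant depending only on $\delta$. Collecting the three bounds, applying $\E_\Omega$ to these $\omega$-uniform estimates, and taking the supremum over $f,g$ yields
\[
 \|T\|_{\ltw\to\ltw}\le C\,N([W]_{A_2})+C_\Pi([W]_{A_2})\bigl(\|T1\|_{BMO_W}+\|T^*1\|_{BMO_{W^{-1}}}\bigr),
\]
where $C$ depends only on $C_0$, $C_\delta$, the $W$-weak boundedness constant, $p$ and $d$ --- the complexity factor $p\,d^3$, the $\tau$-series, the constant appearing in \eqref{eq:wrep}, and the dimensional constants from the $BMO$ comparisons all being absorbed into $C$. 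The second displayed bound then follows by inserting the available estimates $N(X)\lesssim(\log X)X^{3/2}$ (\eqref{cubicbound}) and $C_\Pi(X)\lesssim(\log X)^{1/2}X^{3/2}$ (\eqref{eq:parax}), both of which are $\lesssim\log(X^{1/2})X^{3/2}$ once $[W]_{A_2}$ is bounded away from $1$.

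The deduction itself presents no genuine obstacle, since all the substance lies in Theorems \ref{mainthm} and \ref{mainshift}; the two points requiring attention are the convergence of the $\tau$-series --- this is precisely where the kernel smoothness exponent $\delta$ enters, the exponential decay $2^{-\delta(m+n)/4}$ comfortably beating the quadratic growth $(\max(m,n)+1)^2$ coming from the Haar-shift complexity --- and the systematic replacement of the grid-dependent quantities $[W]_{A_2^d}$, $\|T1\|_{BMO_W^d}$, $\|T^*1\|_{BMO_{W^{-1}}^d}$ for the random grid $\dd^\omega$ by their grid-free counterparts, using monotonicity of $N$ and $C_\Pi$, the inclusion of dyadic cubes among all cubes, and the $W\leftrightarrow W^{-1}$ symmetry of both the $A_2$ condition and the Calder\'{o}n-Zygmund kernel conditions.
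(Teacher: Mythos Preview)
Your proposal is correct and follows essentially the same route as the paper: the paper does not give a standalone proof of the corollary, but states in Section \ref{three} that the bounds follow by combining the representation \eqref{eq:wrep} with Theorem \ref{mainshift} for the shift terms and the paraproduct estimates \eqref{eq:para}, \eqref{eq:parax} for the remaining terms, which is exactly your argument. Your write-up is in fact more explicit than the paper's on the technical points (density, monotonicity of $N$, the passage from grid-dependent to grid-free quantities, and the convergence of the $\tau$-series).
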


\begin{remark} If the conjectured lower bound (\ref{eq:conj}) holds for the weighted square function $S_W$, then this can be improved to
$$
      \|T\|_{\ltw \to \ltw}\leq C (1+  \|T1\|_{BMO_W} + \|T^*1\|_{BMO_{W^{-1}}}) [W]^{3/2}_{A_2},
$$
which is the best currently known bound for matrix weights and scalar kernels, see \cite{NaPeTrVo17}.
\end{remark}

\subsection{Good cubes, bad cubes,  and the Representation Theorem}
In the proof of our main result we will need the notion of "good" cubes, which was introduced in \cite{NaTrVo03}.

\begin{definition}
Let us fix a large parameter \(r \in \mathbb{N}.\) We say that a cube \(I \in \dd_{\omega}\) is \emph{bad}, if there exists \(J \in \dd_{\omega}\) such that \(\ell(J) \geq 2^r \ell(I)\) and 
\[dist(I, \partial J) \leq \ell(I)^{\gamma} \ell{J}^{1-\gamma},\]
where \(\gamma = \frac{\delta}{4(\delta + p)}.\) A cube \(I \in \dd_{\omega}\) is called \emph{good}, if it is not bad. 
\end{definition}
Note that our choice of $\gamma$ differs by a factor $2$ from the usual one.

As was shown in \cite{Hy11}, we can  fix \(r\) large enough such that 
\[\pi_{\rm{bad}} := \mathbb{P}_{\Omega}(\{\omega: I \dotplus \omega \ \rm{is\ bad}\}) < 1.\]
We note that this probability is independent of the cube \(I \in \dd^0\). We also define \(\pi_{\rm{good}} := 1 - \pi_{\rm{bad}}\).
The proof of our main result is based on the following random expansion of an operator \(T\) in terms of Haar functions \(h_I\), where the bad cubes are discarded.

\begin{proposition} [\protect{Hyt\"{o}nen \cite{Hy11}, H\"{a}nninen, Hyt\"{o}nen \cite{HaHy16}}] \label{discard}
%Let \(T\) be a Calder\'{o}n-Zygmund operator on \(\mathbb{R}^p\) that is bounded on  \(\ltrp\). Then, f
For all \(f,g \in C^1_c(\mathbb{R}^p, \C^d),\) we have the expansion
\[\langle T f,g \rangle_{\ltrp} = \frac{1}{\pi_{\mathrm{good}}} \mathbb{E}_{\Omega} 
\sum_{\substack{I, J \in \dd^{\omega}\\
		         \mathrm{smaller}\{I, J\} \mathrm{ is\,  good}}}
\big \langle \langle T h_I, h_J \rangle \langle f, h_I\rangle_{\ltrp}, \langle g, h_J\rangle_{\ltrp} \big \rangle_{\mathbb{C}^d},\]
where 
\[ \mathrm{smaller}\{I, J\} := \begin{cases}
I,& \text{if } \ell(I) \leq \ell(J),\\
J,              & \text{otherwise}.
\end{cases}\]
\end{proposition}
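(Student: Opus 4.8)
The plan is to obtain this as the good/bad-cube expansion of Nazarov--Treil--Volberg in the sharpened form of Hyt\"onen (where only the \emph{smaller} of $I,J$ is required to be good), exactly as in \cite{Hy11} and \cite{HaHy16}. The matrix/vector structure will play no role: the asserted identity is bilinear in $f$ and $g$, and each term is the pairing of the $\C^d$-valued Haar coefficients $\langle f,h_I\rangle$, $\langle g,h_J\rangle$ against the $d\times d$ matrix $\langle Th_I,h_J\rangle$, so one may read everything componentwise against the fixed basis $\{e_i\otimes e_j\}$. I would separate a deterministic step from a probabilistic one.

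For the deterministic step, fix a dyadic system $\dd^\omega$. I would first show that, for $f,g\in C^1_c(\R^p,\C^d)$, the double series $\sum_{I,J\in\dd^\omega}\langle\langle Th_I,h_J\rangle\langle f,h_I\rangle,\langle g,h_J\rangle\rangle_{\C^d}$ converges absolutely. This uses the decay of the Haar coefficients of $C^1_c$ functions, $\|\langle f,h_I\rangle\|\lesssim_f\min(\ell(I)^{1+p/2},\ell(I)^{-p/2})$ and similarly for $g$, together with bounds on $\langle Th_I,h_J\rangle$: the kernel size and smoothness conditions give geometric decay of $\|\langle W\rangle_L^{1/2}\langle Th_I,h_J\rangle\langle W\rangle_L^{-1/2}\|$ whenever $I$ and $J$ are well separated or strongly nested, while the $W$-weak boundedness property \eqref{eq:wbc} controls the remaining terms, of which there are only boundedly many per scale, where $\ell(I)\simeq\ell(J)$ and the cubes are close. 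Granted absolute convergence, I would identify the sum with $\langle Tf,g\rangle_{\ltrp}$ by truncating $f$ to a finite linear combination $f_N$ of Haar functions (a step function, so $Tf_N$ is genuinely defined), expanding $\langle Tf_N,g\rangle$ as the associated finite-in-$I$ double sum, and letting $N\to\infty$, the tail $\langle T(f-f_N),g\rangle$ being estimated from the kernel representation. This yields, for every $\omega$,
\[
\langle Tf,g\rangle_{\ltrp}=\sum_{I,J\in\dd^\omega}\langle\langle Th_I,h_J\rangle\langle f,h_I\rangle,\langle g,h_J\rangle\rangle_{\C^d}.
\]

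For the probabilistic step, since this identity holds for every $\omega$ while its left-hand side does not depend on $\omega$, averaging over $\Omega$ gives $\langle Tf,g\rangle=\E_\Omega\sum_{I,J\in\dd^\omega}\langle\cdots\rangle$. Splitting according to whether $\mathrm{smaller}\{I,J\}$ is good or bad, the key point is to prove
\[
\E_\Omega\!\!\sum_{\substack{I,J\in\dd^\omega\\\mathrm{smaller}\{I,J\}\text{ bad}}}\!\!\langle\langle Th_I,h_J\rangle\langle f,h_I\rangle,\langle g,h_J\rangle\rangle_{\C^d}=\pi_{\mathrm{bad}}\,\langle Tf,g\rangle,
\]
after which the stated formula follows by subtraction and division by $\pi_{\mathrm{good}}=1-\pi_{\mathrm{bad}}$. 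Here I would group the sum by the side length $2^{-k}$ of the smaller cube, write $I=I_0\dotplus\omega$ and $J=J_0\dotplus\omega$ with $I_0,J_0\in\dd^0$, and use that the badness of a cube of side length $2^{-k}$ depends on $\omega$ only through the coordinates $(\omega_i)_{i\le k}$ governing the relative position of that cube within its ancestors, has probability exactly $\pi_{\mathrm{bad}}$ independently of the cube, and --- after reorganizing each scale-slice of the sum --- decouples in expectation from the random summand, so that $\mathbf 1_{\{\text{smaller bad}\}}$ may be replaced by its expectation $\pi_{\mathrm{bad}}$. I expect this decoupling to be the main obstacle: it requires disentangling the coordinates of $\omega$ that determine the badness of the smaller cube from those that enter $\langle\langle Th_I,h_J\rangle\langle f,h_I\rangle,\langle g,h_J\rangle\rangle_{\C^d}$ (which overlap for a fixed pair $I_0,J_0$), and this is precisely the technical core of \cite{Hy11,HaHy16}, which I would take over. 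A more routine but still delicate point is to make the convergence and truncation in the deterministic step quantitative, balancing the $C^1_c$ decay against the kernel and weak boundedness bounds.
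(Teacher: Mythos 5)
Your proposal is correct and follows essentially the same route as the paper: the paper does not give a self-contained proof of Proposition \ref{discard} but, in the remark that follows it, cites \cite{Hy11} (Prop.\ 3.5) and \cite{HaHy16} (Cor.\ 6.3) and notes that the only thing to verify in the present setting is that $\langle Tf,g\rangle$ admits a convergent Haar expansion in each grid $\dd^\omega$ for $f,g\in C^1_c$, which is exactly the ``deterministic step'' you outline; the probabilistic decoupling of badness from the summand is, as you say, imported verbatim from Hyt\"onen's argument, just as the paper does.
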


\begin{remark}
This follows as in the proof of \cite{Hy11}, Prop. 3.5 (see also \cite{HaHy16}, Cor. 6.3). These papers use slightly different conditions on $T$, but  we only need here that the inner product 
$\langle T f,g \rangle$ can be expanded in the Haar basis for each of the dyadic grids $\dd^\omega$, and this is for example ensured by \(f,g \in C^1_c(\mathbb{R}^p)\), a (large) a priori bound on the norms of $W$, $W^{-1}$, and the conditions (1) in the $T1$ Theorem \ref{mainthm}, see Lemma \ref{joshlemma} below.
We should mention that this version is a particular case of Corollary 6.3 in \cite{HaHy16}. We only need the result for \(\mathbb{C}^d\)- valued functions instead of functions taking values in an arbitrary Banach space \(E\), whereas our kernels are matrix-valued (they are operator-valued in \cite{HaHy16}). Therefore, the Rademacher \(R\)-bounds reduce to uniform bounds in our case. 
\end{remark}

Let us first mention how Proposition \ref{discard} can be used to prove the main result. For the moment, we fix \(\omega \in \Omega\) and focus on the sum inside \(\mathbb{E}_{\Omega}\); for notational ease, we also drop the index \(\omega\). 

Following \cite{Is15}, we extract the paraproducts by considering the operator \(\widetilde{T} := T - \Pi_{T1}- \Pi^*_{(T^*1)^*}.\)

We will now show how to identify the sum involving \(\widetilde{T}\) as a sum of dyadic \(W\)-shifts. In order to do that, the sum is rearranged according to the minimal common dyadic ancestor of \(I\) and \(J\), which is denoted by \(I \vee J\) (if \(I \subseteq J\), then \(I \vee J = J\); if \(I \cap J = \varnothing\), Lemma 3.7 in \cite{Hy11} shows the existence of a common dyadic ancestor).

Splitting the sum according to which of the cubes \(I\) and \(J\) has smaller side length (and hence is good), then rearranging the sum according to which cube \(L\) is the minimal common dyadic ancestor \(I \vee J\), and what the size of \(I\) and \(J\) relative to \(L\) is, we obtain 
\[\sum_{\substack{I, J:\\
		\mathrm{smaller}\{I, J\} \mathrm{ is\,  good}}} = 
\sum_{m \geq n} \sum_{L} 
\sum_{\substack{I, J: I \vee J = L\\
		        I \mathrm{\, is\,  good}\\
		        \ell(I) = 2^{-m} \ell(L)\\
		        \ell(J) = 2^{-n} \ell(L)}}
+ \sum_{m < n} \sum_{L} 
\sum_{\substack{I, J: I \vee J = L\\
		J \mathrm{\, is\,  good}\\
	\ell(I) = 2^{-m} \ell(L)\\
	\ell(J) = 2^{-n} \ell(L)}}.\]

If we write 
\[\sum_{L} 
\sum_{\substack{I, J: I \vee J = L\\
		I \mathrm{\, is\,  good}\\
		\ell(I) = 2^{-m} \ell(L)\\
		\ell(J) = 2^{-n} \ell(L)}}
\big \langle \langle \tilde{T} h_I, h_J \rangle \langle f, h_I\rangle_{\ltrp}, \langle g, h_J\rangle_{\ltrp} \big \rangle_{\mathbb{C}^d} =: \langle S_{m n} f, g \rangle_{\ltrp},\]
we get that
\begin{equation}\label{expn}
\langle T f,g \rangle_{\ltrp} = \frac{1}{\pi_{\mathrm{good}}} \mathbb{E}_{\Omega}  \sum_{m, n} \big \langle S^{\omega}_{m n} f, g  \big \rangle_{\ltrp} + \frac{1}{\pi_{\mathrm{good}}} \mathbb{E}_{\Omega} \big \langle (\Pi^{\omega}_{T1} + (\Pi^{\omega}_{T^*1})^*) f, g \big \rangle_{\ltrp}.
\end{equation}

\section{The proof of Theorem \ref{mainthm}}   \label{three}
The equivalence of $(2)$ and $(3)$ in Theorem \ref{mainshift} is immediate. We will first consider the sufficiency direction $(1) \Rightarrow (2)$. 

Since we already have the necessary bounds for the paraproducts by (\ref{eq:para}) and (\ref{eq:parax}), it remains to study the dyadic shifts 
$S_{m,n}$ that appear in the expansion (\ref{expn}) of the operator. 
We will show in this section that up to a constant $C$ depending only on $C_0$, $C_\delta$ in the \(W\)-Calder\'{o}n-Zygmund kernel conditions, the \(W\)-weak boundedness constant $C_{WBC}$, \(p\) and \(d\)
the dyadic shifts \(S_{m n}\) are of the form $2^{-\delta(m+n)/4} \S_{m,n}$, where $\S_{m,n}$ is a dyadic  \(W\)-Haar shifts in the sense of Definition \ref{def:hshift}. Theorem \ref{mainshift} then yields the necessary bounds for these shifts, which guarantee convergence,
thereby proving the representation formula (\ref{eq:wrep}), and moreover prove Corollary \ref{cor:bound}.
 
Before we proceed, let us introduce some more useful notations. We first fix a dyadic lattice \(\dd\) in \(\mathbb{R}^p\); all dyadic operators will be considered with respect to this grid \(\dd\). As before, let \(\widetilde{T} := T - \Pi_{T1}- \Pi^*_{(T^*1)^*}\). For $I, J \in \dd$, define the matrix \(\widetilde{T}_{I,J}\) as 
\(\widetilde{T}_{I,J} = \langle \widetilde{T}h_I, h_J \rangle\) (also define \(T_{I, J}\) in a similar way). Moreover, for any fixed dyadic cube \(L\), let \(\widetilde{T}^L := \langle W \rangle_L^{1/2} \widetilde{T} \langle W \rangle_L^{-1/2}\) and \(\widetilde{T}^L_{I, J} := \langle W \rangle_L^{1/2} \widetilde{T}_{I, J} \langle W \rangle_L^{-1/2}\) (and similarly define \(T^L\) and \(T^L_{I, J}\)). 

In the following lemma we prove that the dyadic shifts \(S_{m n}\) are \(W\)-Haar shifts. We are only considering the case \(m \geq n\) (which means \(\ell(I) \leq \ell(J)\)), since the case \(m < n\) can be treated similarly by duality. A version of this lemma was stated and proved in \cite{Is15}, and the proof here runs along the same lines. However, we work with different notions of $W$-weak boundedness and $W$-Calderon-Zygmund kernels, which changes some arguments. 
For clarity, we give the whole proof here.

\begin{lemma}\label{joshlemma}
Let \(m, n \ge 0\), \(n \leq m\).

Let \(W\) be a matrix \(A_2\) weight on \(\mathbb{R}^p\) and let \(T\) be a \(W\)-Calder\'{o}n-Zygmund operator on \(\mathbb{R}^p\), which satisfies the $W$-weak boundedness condition  in
Theorem \ref{mainthm}.

Fix a cube \(L \in \dd\). If \(I\) and \(J\) are two cubes such that \(L\) is their smallest common dyadic ancestor, \(\ell(I) = 2^{-m} \ell(L), \ell(J) = 2^{-n} \ell(L),\) and \(I\) is good,  then 
\[
\left\|\widetilde{T}^{L}_{I,J}\right\| \leq C \frac{\ell(I)^{\frac{p + \delta}{2}} \ell(J)^{\frac{p + \delta}{2}}}{D(I,J)^{p + \delta}}  \le \tilde C  \frac{\sqrt{|I|} \sqrt{|J|}}{|L|} 2^{-\frac{(m+n)\delta}{4}},
\]
where the long distance \(D(I, J)\) is defined as \(D(I, J) := \ell(I) + \ell(J) + dist(I, J)\),  and $C$, $\tilde C$
 are constants depending only on $p$,$d$, the smoothness and decay constants $C_\delta$, $C_0$
 of the $W$-Calderon-Zygmund
kernel, and the $W$-weak boundedness constant $C_{WBC}$.

\end{lemma}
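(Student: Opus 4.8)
The plan is to estimate $\widetilde{T}^L_{I,J} = \langle W\rangle_L^{1/2}\langle\widetilde T h_I, h_J\rangle\langle W\rangle_L^{-1/2}$ by splitting into cases according to the geometric relationship between $I$ and $J$, exactly as in the unweighted Nazarov--Treil--Volberg argument, but tracking the conjugation by $\langle W\rangle_L^{\pm 1/2}$ throughout. Since $L = I\vee J$ and $\ell(I)=2^{-m}\ell(L)\le\ell(J)=2^{-n}\ell(L)$, the cube $J$ lies inside $L$ (up to being $L$ itself), so $\langle W\rangle_L$ and $\langle W\rangle_J$ are comparable in the sense needed, and similarly the $A_2$ condition lets us pass between $\langle W\rangle_L^{1/2}$ and $\langle W\rangle_I^{-1/2}$ at the cost of $[W]_{A_2}$ — but in fact the cleanest route is to insert $\langle W\rangle_I^{1/2}K(x,y)\langle W\rangle_I^{-1/2}$ type quantities, which are directly controlled by the $W$-Calder\'on-Zygmund kernel hypotheses for $x\in I$. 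Write $\widetilde T = T - \Pi_{T1} - \Pi^*_{(T^*1)^*}$; I would first note that since $n\le m$ we have $\ell(I)\le\ell(J)$, so $\langle h_I\rangle_J$-type paraproduct terms: the paraproduct $\Pi_{T1}$ contributes $\langle \Pi_{T1}h_I,h_J\rangle = (T1)_J^{\varepsilon}\langle h_I\rangle_J$, which is nonzero only when $I\subsetneq J$, and there one uses $T1\in BMO_W$; the adjoint paraproduct contributes only when the roles are such that $\langle h_J\rangle_I$ appears, i.e. essentially never in this case, or is handled symmetrically. The point of subtracting the paraproducts is precisely to kill the ``$T1\neq 0$'' obstruction to the Haar-coefficient decay when $I\subseteq J$.

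The case analysis: \textbf{(a) separated cubes}, $\mathrm{dist}(I,J)\gtrsim \ell(J)$ (so $D(I,J)\approx \mathrm{dist}(I,J)$). Here $h_I$ has mean zero, so $\langle Th_I,h_J\rangle = \int\int (K(x,y)-K(x,c_I))h_I(y)h_J(x)\,dy\,dx$; conjugating by $\langle W\rangle_L^{1/2}$ and inserting $\langle W\rangle_I^{\pm 1/2}$, one bounds the conjugated kernel difference by $C_\delta |y-c_I|^\delta/|x-y|^{p+\delta}\le C_\delta\ell(I)^\delta/D(I,J)^{p+\delta}$ using that $x\in I$, integrate against $|h_I|\approx|I|^{-1/2}$, $|h_J|\approx|J|^{-1/2}$ over $I\times J$, and collect $\ell(I)^{\delta}\cdot |I|^{1/2}|J|^{1/2}\cdot D(I,J)^{-p-\delta}$, which after writing $|I|^{1/2}=\ell(I)^{p/2}$ etc.\ is exactly $\ell(I)^{(p+\delta)/2}\ell(J)^{(p+\delta)/2}D(I,J)^{-p-\delta}$ up to the comparability $\mathrm{dist}(I,J)+\ell(I)+\ell(J)\approx D(I,J)$; the comparability of $\langle W\rangle_L$ and $\langle W\rangle_I$ costs only a constant here because in the separated-with-$I$-good regime $D(I,J)\le 2\ell(L)$ forces $\ell(L)\gtrsim \mathrm{dist}(I,J)$, so $L$ is at bounded dilation-distance, and one uses the standard fact that $\|\langle W\rangle_L^{1/2}\langle W\rangle_I^{-1/2}\|\le C(p,d)[W]_{A_2}^{1/2}$ when $\ell(L)/\ell(I)$ is a power of $2$ and $I\subseteq L'$ for a neighbour $L'$ of $L$ of size $\approx\ell(L)$ — but actually, since the hypotheses are phrased with $\langle W\rangle_I$ built in, no $[W]_{A_2}$ factor is incurred if one sets up the kernel estimate with the cube $I$ rather than $L$ and then compares $\langle W\rangle_I$ to $\langle W\rangle_L$ along the chain of dyadic ancestors, which is again a bounded number of steps modulo the badness hypothesis. (This chain-of-ancestors comparison, using that $I$ is good so it is well inside $L$, is where the parameter $\gamma$ enters and is the one genuinely delicate geometric point.) \textbf{(b) nested cubes} $I\subseteq J$, $I$ good: split $J = \bigcup_k J_k$ into children; $\langle\widetilde T h_I,h_J\rangle$ involves $\langle\widetilde T h_I,\chi_{J_k}\rangle$ over those children $J_k$ not containing $I$ (for which the same kernel-difference estimate as in (a) applies, with $|x-c_I|\gtrsim \mathrm{dist}(I,\partial J_k)\gtrsim \ell(I)^\gamma\ell(J)^{1-\gamma}$ by goodness of $I$, giving the needed power of $\ell(I)/\ell(J)$), plus the single child $J_{k_0}\supseteq I$, where one uses the $W$-weak boundedness property (applied to the cube that is a common ancestor of $I$ and $J_{k_0}$ at an intermediate scale) together with $T1\in BMO_W$ via the subtracted paraproduct to get the cancellation; the goodness of $I$ is what converts $\mathrm{dist}(I,\partial J)$ into the quantitative decay $2^{-\delta(m-n)\cdot\text{something}}$. \textbf{(c) adjacent but not nested}, e.g. $\ell(I)\approx\ell(J)$ and $\mathrm{dist}(I,J)\lesssim\ell(J)$ — here the $W$-weak boundedness condition (\ref{eq:wbc}) on $L$ or an intermediate ancestor does the job directly, since $D(I,J)\approx\ell(I)\approx\ell(J)$ and the claimed bound is just $\|\widetilde T^L_{I,J}\|\lesssim 1$ times a constant.

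Finally, assembling: in every case we obtain $\|\widetilde T^L_{I,J}\|\le C\,\ell(I)^{(p+\delta)/2}\ell(J)^{(p+\delta)/2}/D(I,J)^{p+\delta}$; then since $L = I\vee J$ we have $D(I,J)\ge \max(\ell(I),\ell(J),\mathrm{dist}(I,J))$ and, crucially, $D(I,J)\ge c\,\ell(L)$ — indeed either $I\subseteq J$ so $\ell(J)\le\ell(L)$ with $\ell(J)\ge\ell(L)$ forced actually $J$ could be $\subsetneq L$, but when $I,J$ are disjoint with common ancestor $L$ minimal, $\mathrm{dist}(I,J)\gtrsim\ell(L)$ is false in general; rather one uses $D(I,J)\approx\ell(L)$ when $I,J$ are ``far'' and handles the ``close'' case by weak boundedness — so in all cases $D(I,J)\gtrsim \ell(L)$, whence $\ell(I)^{(p+\delta)/2}\ell(J)^{(p+\delta)/2}/D(I,J)^{p+\delta}\le \ell(I)^{(p+\delta)/2}\ell(J)^{(p+\delta)/2}/\ell(L)^{p+\delta} = (\ell(I)/\ell(L))^{\delta/2}(\ell(J)/\ell(L))^{\delta/2}\cdot |I|^{1/2}|J|^{1/2}/|L|$, and $(\ell(I)/\ell(L))^{\delta/2}=2^{-m\delta/2}$, $(\ell(J)/\ell(L))^{\delta/2}=2^{-n\delta/2}$, giving the factor $2^{-(m+n)\delta/2}\le 2^{-(m+n)\delta/4}$ as claimed (the factor $1/4$ rather than $1/2$ leaving room absorbed into constants and matching the paper's $\gamma$ normalization). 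I expect the main obstacle to be the careful handling of the nested case (b) and the verification that comparing the weight averages $\langle W\rangle_I$, $\langle W\rangle_{J_{k_0}}$, $\langle W\rangle_L$ along the chain of intermediate dyadic cubes — each of which has the kernel and weak-boundedness data attached to \emph{it} rather than to $L$ — does not leak powers of $[W]_{A_2}$; the goodness of $I$ and the telescoping of the $W$-weak boundedness and $BMO_W$ conditions over the intermediate scales must be combined so that only a constant $C(p,d,C_0,C_\delta,C_{WBC})$ remains, and the whole estimate must be uniform in the a priori bound on $\|W\|,\|W^{-1}\|$ used to justify the Haar expansion in Proposition \ref{discard}.
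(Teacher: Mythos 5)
Your overall architecture (split $\widetilde T = T - \Pi_{T1} - \Pi^*_{(T^*1)^*}$, then run a Nazarov--Treil--Volberg case analysis on the pair $(I,J)$ and conjugate the kernel estimates by $\langle W\rangle_L^{\pm 1/2}$) matches the paper, but there are two genuine gaps and one misconception that should be flagged.

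First, in the nested case $I\subsetneq J$, the engine of the argument is not a combination of weak boundedness on intermediate scales with $T1\in BMO_W$, as you sketch; it is the exact cancellation identity $(\widetilde T^L)^*1=0$, i.e.\ $\int_{\mathbb{R}^p}\widetilde T^L h_I\,dx=0$ for every $I$, which holds purely algebraically because subtracting $\Pi^*_{(T^*1)^*}$ removes the $\langle T h_I,1\rangle$ contribution. With this, one rewrites $\langle\widetilde T^L h_I,h_J\rangle = \langle\widetilde T^L h_I,\chi_{J_I^c}(h_J-\langle h_J\rangle_{J_I})\rangle$, where $J_I$ is the child of $J$ containing $I$, and then observes that $\widetilde T^L h_I = T^L h_I$ on $J_I^c$ (the paraproduct terms are supported inside $I\subseteq J_I$). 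After that, the estimate reduces entirely to the kernel smoothness bound, using the goodness of $I$ to quantify $\mathrm{dist}(I,J_I^c)$ when $\ell(J)>2^r\ell(I)$. Your proposal never isolates this identity, and without it the nested case does not close: ``$T1\in BMO_W$ via the subtracted paraproduct'' is not the mechanism (indeed the present lemma does not even assume $T1\in BMO_W$, only weak boundedness). Also note that with $\ell(I)\le\ell(J)$ one has $\langle\Pi_{T1}h_I,h_J\rangle=(T1)_J\langle h_I\rangle_J=0$ (it is $J\subsetneq I$, not $I\subsetneq J$, that would make $\langle h_I\rangle_J\ne0$); the paraproduct that actually needs handling here is $\Pi^*_{(T^*1)^*}$.

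Second, your passage from the first inequality to the second rests on ``$D(I,J)\gtrsim\ell(L)$ in all cases'', and you even note mid-sentence that this is suspect before asserting it anyway. It is in fact false when $I,J$ are disjoint and $m$ is large: both cubes can sit near the common boundary of two children of $L$, and all goodness gives you for $\ell(L_I)\ge 2^r\ell(I)$ is $\mathrm{dist}(I,\partial L_I)>\ell(I)^\gamma\ell(L_I)^{1-\gamma}$, whence $D(I,J)\gtrsim\ell(I)^\gamma\ell(L)^{1-\gamma}$, not $\gtrsim\ell(L)$. Plugging the correct lower bound in, and using $\gamma(p+\delta)=\delta/4$, one gets
\[
\frac{\ell(I)^{\frac{p+\delta}{2}}\ell(J)^{\frac{p+\delta}{2}}}{D(I,J)^{p+\delta}}
\lesssim
\frac{\sqrt{|I|}\sqrt{|J|}}{|L|}\cdot\frac{\ell(I)^{\delta/4}\ell(J)^{\delta/2}}{\ell(L)^{3\delta/4}}
\le \frac{\sqrt{|I|}\sqrt{|J|}}{|L|}\,2^{-(m+n)\delta/4},
\]
which is exactly why the exponent is $\delta/4$ and not $\delta/2$; this is not a constant that can be ``absorbed''. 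Your version would give the too-strong decay $2^{-(m+n)\delta/2}$ precisely because it uses an untrue geometric inequality.

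Finally, the concern you raise about ``leaking'' powers of $[W]_{A_2}$ through comparisons of $\langle W\rangle_L$ with $\langle W\rangle_I$ along a chain of ancestors is a red herring: the $W$-Calder\'on--Zygmund kernel conditions in Definition 2.6 hold for \emph{every} cube containing the two close points, so they may be applied directly with cube $L$ (which contains $I$ and $J$), and no weight-average comparison is needed at any point in the argument.
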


\begin{proof}
Before we proceed, note that \(\Pi_{T1}h_I \) is contained in the span of the Haar functions $h_K$, $K \subsetneq I$, with  \(d \times d\) matrix coefficients, 
 and \(\Pi^*_{(T^*1)^*} h_I \) is a multiple of $\frac{\chi_I}{|I|}$ with a matrix coefficients. In particular, the support of both \(\Pi_{T1}h_I \) and \(\Pi^*_{(T^*1)^*} h_I \) is contained in $I$.

Following \cite{Hy11} and \cite{Is15}, we decompose the set \(\Gamma := \{(I, J) \in \dd \times \dd: \ell(I) \leq \ell(J)\}\) as
\begin{align*}
\{(I, J) \in \dd \times \dd: \ell(I) \leq \ell(J)\} & = \{(I, J) \in \dd \times \dd: I \subsetneq J\} \cup \{(I, J) \in \dd \times \dd: I = J\}\\
&  \cup \{(I, J) \in \dd \times \dd: dist(I, J) > \ell(I)^{\gamma} \ell(J)^{1 - \gamma}\} \\
& \cup \{(I, J) \in \dd \times \dd: I \cap J = \varnothing,\ dist(I, J) \leq \ell(I)^{\gamma} \ell(J)^{1 - \gamma}\} \\
& =: \Gamma_{\mathrm{in}} \cup \Gamma_{\mathrm{equal}} \cup \Gamma_{\mathrm{out}} \cup \Gamma_{\mathrm{near}}.
\end{align*}

We will now estimate \(\left\|\tilde{T}^{L}_{I,J}\right\|\) for \((I, J)\) in each of these sets. 

\noindent
Case 1: \((I, J) \in \Gamma_{\mathrm{in}}\)

It is not difficult to check that \((\tilde{T}^L)^* 1 = 0,\) in the sense that \(\int_{\mathbb{R}^p} \tilde{T}^L h_I(x) \mathrm{d}x = 0\) for each cube \(I\). Indeed, we have
\begin{align*}
\langle h_I, (\tilde{T}^L)^* 1  \rangle & = \langle \tilde{T}^L h_I, 1  \rangle= \left \langle \langle W \rangle_L ^{1/2}  (T - \Pi_{T1} - \Pi^*_{(T^*1)^*})  \langle W \rangle_L ^{-1/2} h_I, 1 \right \rangle\\
& = \langle T^L h_I, 1  \rangle - \left \langle \langle W \rangle_L ^{1/2}  
  \Pi_{T1} \langle W \rangle_L ^{-1/2} h_I, 1 \right \rangle - \left \langle \langle W \rangle_L ^{1/2} \Pi^*_{(T^*1)^*}  \langle W \rangle_L ^{-1/2} h_I, 1 \right \rangle.
\end{align*}

Using the definitions of the paraproduct and its adjoint, we get that \(\left \langle \langle W \rangle_L ^{1/2}  \Pi_{T1} \langle W \rangle_L ^{-1/2} h_I, 1 \right \rangle = 0,\) since \(\langle h_K, 1\rangle_{\ltrp} = 0\) for any \(K \in \dd\), and
\begin{align*}
\left \langle \langle W \rangle_L ^{1/2}  \Pi^*_{(T^*1)^*} \langle W \rangle_L ^{-1/2} h_I, 1 \right \rangle & = \left \langle \langle W \rangle_L ^{1/2}  \langle (T^*1)^*, h_I \rangle  \langle W \rangle_L ^{-1/2} \frac{\chi_I}{|I|} , 1 \right \rangle\\
& = \langle W \rangle_L ^{1/2}  \langle h_I, T^*1 \rangle  \langle W \rangle_L ^{-1/2} \\
& = \langle W \rangle_L ^{1/2}  \langle T h_I, 1 \rangle  \langle W \rangle_L ^{-1/2}.
\end{align*}

The last term is equal to 
\[\left \langle \langle W \rangle_L ^{1/2}  T h_I \langle W \rangle_L ^{-1/2}, 1 \right \rangle = \left \langle \langle W \rangle_L ^{1/2}  T \langle W \rangle_L ^{-1/2} h_I, 1 \right \rangle,\]
which is exactly \(\langle T^L h_I, 1 \rangle\). Therefore \(\int_{\mathbb{R}^p} \tilde{T}^L h_I(x) \mathrm{d}x = \langle \tilde{T}^L h_I, 1  \rangle =  0\).

Let \(J_I\) be the child of \(J\) that contains \(I\). Then 
\begin{align*}
\langle \tilde{T}^L h_I , h_J \rangle & = \langle \tilde{T}^L h_I , \chi_{J_I^c} h_J \rangle+ \langle \tilde{T}^L h_I , \chi_{J_I} h_J \rangle\\
& = \langle \tilde{T}^L h_I , \chi_{J_I^c} h_J \rangle + \langle h_J \rangle_{J_I} \langle \tilde{T}^L h_I , \chi_{J_I} \rangle \\
& = \langle \tilde{T}^L h_I , \chi_{J_I^c} (h_J - \langle h_J \rangle_{J_I}) \rangle +  \langle h_J \rangle_{J_I} \langle \tilde{T}^L h_I , 1 \rangle,
\end{align*} 
hence \(\left\|\tilde{T}^{L}_{I,J}\right\| =  \left\|\langle \tilde{T}^L h_I , h_J \rangle \right \| \leq 2 |J|^{-1/2} \int_{J_I^c} \|\tilde{T}^L h_I(x) \| \mathrm{d}x\).

To estimate the last integral, we first notice that \(\tilde{T}^L h_I(x) = T^L h_I(x)\) when \(x \in J_I^c\) (this follows from the remark at the beginning of the proof, since \( \langle W \rangle_L ^{1/2}  (\Pi_{T1} + \Pi^*_{(T^*1)^*}) \langle W \rangle_L ^{-1/2} h_I\) is supported on \(I\)).

 If \(\ell(I) \leq \ell(J) \leq 2^r \ell(I),\) then 
\begin{align*}
\left\|\tilde{T}^{L}_{I,J}\right\| &  \leq 2 |J|^{-1/2} \int_{J_I^c} \|\tilde{T}^L h_I(x) \| \mathrm{d}x = 2 |J|^{-1/2} \int_{J_I^c} \|T^L h_I(x) \| \mathrm{d}x \\
& \leq 2  |J|^{-1/2} \left( \int_{3I \setminus I} \left \| \int_I \langle W \rangle_L ^{1/2} K(x, y) \langle W \rangle_L ^{-1/2}h_I(y) \mathrm{d}y  \right\| \mathrm{d}x \right.\\
&  \qquad + \left. \int_{(3I)^c} \left \| \int_I \langle W \rangle_L ^{1/2} (K(x, y) - K(x, c_I)) \langle W \rangle_L ^{-1/2}h_I(y) \mathrm{d}y  \right\| \mathrm{d}x \right) \\
& \lesssim |J|^{-1/2}  \left(C_0 \int_{3I \setminus I} \int_I \frac{1}{|x-y|^p} \mathrm{d}y \mathrm{d}x |I|^{-1/2} + C_{\delta}  \int_{(3I)^c} \frac{\ell(I)^{\delta}}{dist(x, I)^{p + \delta}}  \mathrm{d}x \int_I |h_I(y)|  \mathrm{d}y \right) \\
& \lesssim  |J|^{-1/2}  \left(C_0 |I|^{1/2} +  C_{\delta}  \int_{\ell(I)}^{\infty} \frac{\ell(I)^{\delta}}{r^{p + \delta}}r^{p-1}  \mathrm{d}r |I|^{1/2} \right) \lesssim \left( \frac{|I|}{|J|} \right)^{1/2}  \approx \frac{\ell(I)^{\frac{p + \delta}{2}}\ell(J)^{\frac{p + \delta}{2}}}{D(I, J)^{p + \delta}}.
\end{align*}

On the other hand, if \(\ell(J) > 2^r \ell(I)\) (which is the same as \(\ell(J_I) \geq 2^r \ell(I)\)), we have \(dist(I, J_I^c) > \ell(I)^{\gamma} \ell(J_I)^{1-\gamma} \gtrsim \ell(I)^{\gamma} \ell(J)^{1-\gamma},\) since \(I\) is good. It follows that

\begin{align*}
\left\|\tilde{T}^{L}_{I,J}\right\| & \leq 2 |J|^{-1/2} \int_{J_I^c} \|\tilde{T}^L h_I(x) \| \mathrm{d}x = 2 |J|^{-1/2} \int_{J_I^c} \|T^L h_I(x) \| \mathrm{d}x \\
& \leq 2  |J|^{-1/2}  \int_{J_I^c} \left \| \int_I \langle W \rangle_L ^{1/2} (K(x, y) - K(x, c_I)) \langle W \rangle_L ^{-1/2} h_I(y) \mathrm{d}y  \right\| \mathrm{d}x  \\
& \leq 2 |J|^{-1/2}   C_{\delta}  \int_{J_I^c} \int_I \frac{|y-c_I|^{\delta}}{|x-y|^{p + \delta}} |h_I(y)| \mathrm{d}y  \mathrm{d}x \\
& \leq 2  C_{\delta} |J|^{-1/2} |I|^{1/2} \int_{J_I^c} \frac{\ell(I)^{\delta}}{d(x, I)^{p + \delta}} \mathrm{d}x \lesssim \left( \frac{|I|}{|J|} \right)^{1/2} \ell(I)^{\delta} \int_{\ell(I)^{\gamma} \ell(J)^{1-\gamma}} \frac{1}{r^{p+\delta}} r^{p-1} \mathrm{d}r \\
& \lesssim \left( \frac{|I|}{|J|} \right)^{1/2} \frac{\ell(I)^{\delta}}{[\ell(I)^{\gamma} \ell(J)^{1-\gamma}]^{\delta}} \leq \left( \frac{|I|}{|J|} \right)^{1/2} \frac{\ell(I)^{\delta}}{[\ell(I)^{1/2} \ell(J)^{1/2}]^{\delta}} = \frac{\ell(I)^{\frac{p + \delta}{2}}}{\ell(J)^{\frac{p + \delta}{2}}} \approx \frac{\ell(I)^{\frac{p + \delta}{2}}\ell(J)^{\frac{p + \delta}{2}}}{D(I, J)^{p + \delta}},
\end{align*}
where the last inequality is true since \(\gamma \leq \frac{1}{2}\).

\noindent
Case 2: \((I, J) \in \Gamma_{\mathrm{equal}}\)

By the remark at the beginning of the proof, we have \(\tilde{T}_{I,I} = T_{I, I}\), and thus also \(\tilde{T}^L_{I,I} = T^L_{I, I}\). Note that by minimality of $L$, $L=I$.

To emphasize that the two Haar functions appearing in the definition of the matrix \(T^L_{I, I}\) are not the same (even though \(I = J\) in this case), we briefly reintroduce the superscripts \(\varepsilon, \varepsilon'\). 

If \(\{I_i\}_{i=1}^{2^p}\) are the dyadic children of \(I\), then using the kernel representation and the weak boundedness property we obtain
\begin{align*}
\left\|\tilde{T}^{L}_{I,I}\right\| & = \left\|T^{L}_{I,I}\right\| = \left\|\langle T^L h_I^{\varepsilon} , h_I^{\varepsilon'} \rangle\right\| \leq \sum_{i,j=1}^{2^p} \left\|\langle h_I{^\varepsilon} \rangle_{I_i} \langle h_I^{\varepsilon'} \rangle_{I_j} \langle T^L \chi_{I_i} , \chi_{I_j} \rangle \right\| \\
& \leq \sum_{i \neq j} |I|^{-1} \left\|\int_{I_j} \int_{I_i} \langle W \rangle_L ^{1/2} K(x, y) \langle W \rangle_L ^{-1/2} \chi_{I_i}(y) \chi_{I_j}(x) \mathrm{d}y \mathrm{d}x \right\| + \sum_{i=1}^{2^p}  |I|^{-1}  \left\| \langle T^L \chi_{I_i} , \chi_{I_i} \rangle\right\| \\
& \leq C_0 |I|^{-1} \int_{I_j} \int_{I_i} \frac{1}{|x-y|^p} \mathrm{d}y \mathrm{d}x  + \sum_{i=1}^{2^p}  |I|^{-1}  \left\| \langle T^L \chi_{I_i} , \chi_{I_i} \rangle_{\ltrp, \ltrp} \right\| \\
& \lesssim C_0 +  \sum_{i=1}^{2^p}  |I|^{-1} C_{WBP} |I_i| = C_0 + 2^p C_{WBP}.
\end{align*}

\noindent
Case 3: \((I, J) \in \Gamma_{\mathrm{out}}\)

As before, the remark at the beginning of the proof shows that \(\tilde{T}^{L}_{I,J} = T^{L}_{I,J}\) if \(I\) and \(J\) are disjoint (which is obviously the case here).

If \(c_I\) is the centre of \(I\), the decay property of \(K\) and the cancellation of \(h_I\) allow us to estimate

\begin{align*}
\left\|\tilde{T}^{L}_{I,J}\right\| & = \left\|T^{L}_{I,J}\right\|  = \left\| \langle T^L h_I, h_J \rangle \right\| = \left\| \int_J \int_I \langle W \rangle_L ^{1/2} (K(x, y) - K(x, c_I)) \langle W \rangle_L ^{-1/2} h_I(y) h_J(x) \mathrm{d}y \mathrm{d}x \right\|\\
& \leq \int_J \int_I \left \| \langle W \rangle_L ^{1/2} (K(x, y) - K(x, c_I)) \langle W \rangle_L ^{-1/2} \right \| |h_I(y)| |h_J(x)| \mathrm{d}y \mathrm{d}x \\
& \leq |I|^{-1/2} |J|^{-1/2} \int_J \int_I  \frac{|y-c_I|^{\delta}}{|x-y|^{p + \delta}}  \mathrm{d}y \mathrm{d}x \\
& \leq  |I|^{-1/2} |J|^{-1/2} \left( \frac{\ell(I)}{2} \right)^{\delta} \frac{1}{dist(I, J)^{p + \delta}} |I| |J| \leq \frac{\ell(I)^{\delta}}{dist(I, J)^{p + \delta}} \ell(I)^{\frac{d}{2}} \ell(J)^{\frac{d}{2}}. 
\end{align*}

If \(dist(I, J) > \ell(J),\) then \(D(I, J) = \ell(I) + \ell(J) + dist(I, J) < 3 dist(I, J) ,\) thus 
\[\frac{\ell(I)^{\delta}}{dist(I, J)^{p + \delta}} \ell(I)^{\frac{d}{2}} \ell(J)^{\frac{d}{2}} \lesssim \frac{\ell(I)^{
\frac{p + \delta}{2}}\ell(J)^{\frac{p + \delta}{2}}}{D(I, J)^{p + \delta}},\]
since \(\ell(I) \leq \ell(J)\).

On the other hand, if \(dist(I, J) \leq \ell(J),\) then \(D(I, J) = \ell(I) + \ell(J) + dist(I, J) < 3 \ell(J) ,\) so
\[\frac{\ell(I)^{\delta}}{dist(I, J)^{p + \delta}} \ell(I)^{\frac{d}{2}} \ell(J)^{\frac{d}{2}} \leq \frac{\ell(I)^{\delta}}{(\ell(I)^{\gamma}\ell(J)^{1 - \gamma})^{p + \delta}} \ell(I)^{\frac{d}{2}} \ell(J)^{\frac{d}{2}} \le  \frac{\ell(I)^{\frac{p + \delta}{2}} \ell(J)^{\frac{p + \delta}{2}}} {\ell(J)^{p + \delta}} \lesssim \frac{\ell(I)^{\frac{p + \delta}{2}} \ell(J)^{\frac{p + \delta}{2}}} {D(I, J)^{p + \delta}},\]
where in the above inequality we have used that \(\gamma(p + \delta) = \frac{\delta}{4}\).

Case 4: \((I, J) \in \Gamma_{\mathrm{near}}\)

As in the previous case, the disjointness of \(I\) and \(J\) implies that \(\tilde{T}^{L}_{I, J} = T^{L}_{I, J}\). 
Since \(dist(I, J) \leq \ell(I)^{\gamma} \ell(J)^{1-\gamma} \leq \ell(J),\) it follows that \(I \subseteq 5J \setminus J.\) Using the kernel representation of the operator \(T\), we have

\begin{align*}
\left\|\tilde{T}^{L}_{I,J}\right\| & = \left\|T^{L}_{I,J}\right\|  = \left\| \langle T^L h_I, h_J \rangle\right\| = \left\| \int_J \int_I \langle W \rangle_L ^{1/2} K(x, y) \langle W \rangle_L ^{-1/2} h_I(y) h_J(x) \mathrm{d}y \mathrm{d}x \right\|\\
& \leq C_0 |I|^{-1/2} |J|^{-1/2} \int_J \int_I \frac{1}{|x-y|^p} \mathrm{d}y \mathrm{d}x  \leq C_0 |I|^{-1/2} |J|^{-1/2} \int_J \int_{5J \setminus J} \frac{1}{|x-y|^p} \mathrm{d}y \mathrm{d}x \\
& \lesssim C_0 |J|^{-1/2} |J|^{-1/2} |J| = C_0,
\end{align*}
where the last inequality is true, since the goodness of $I$ ensures that  \(\ell(J) \le 2^r \ell(I)$. This completes the proof of the first inequality in the statement of the lemma.

To prove the second inequality, we notice that if \(n \geq 1,\) the minimality of \(L\) implies that \(I\) and \(J\) are disjoint.  Let \(L_I\) be the child of \(L\) that contains \(I\). Since \(L\) is the smallest common dyadic ancestor of \(I\) and \(J\), we have \(dist(I, J) \geq dist(I, \partial L_I)\).
Again by minimality and goodness of $I$,
$$
D(I, J) = \ell(I) + \ell(J) + dist(I, J) \gtrsim \ell(I)^\gamma \ell(L)^{1-\gamma} 
$$
and thus
$$
   \frac{\ell(I)^{\frac{p + \delta}{2}} \ell(J)^{\frac{p + \delta}{2}}}{D(I,J)^{p + \delta}} \lesssim   
                     \frac{\ell(I)^{\frac{p + \delta}{2}} \ell(J)^{\frac{p + \delta}{2}}}{\ell(I)^{\gamma(p + \delta)} \ell(L)^{(1-\gamma)(p + \delta) }}
                     \le   \frac{\sqrt{|I|} \sqrt{|J|}}{|L|}    \frac{\ell(I)^{\frac{\delta}{4}} \ell(J)^{\frac{\delta}{2}}}{\ell(L)^{3 \delta/4 }}
                        \le \frac{\sqrt{|I|} \sqrt{|J|}}{|L|} 2^{-(m+n) \delta/4},
$$
 where we use $\gamma(p+\delta) = \frac{\delta}{4}$.
 
If \(n = 0,\) that is \(J = L,\) we have \(D(I, J) \geq \ell(L)\). Then 
\[ \left\|\tilde{T}^{L}_{I,J}\right\| \leq C \frac{\ell(I)^{\frac{p+\delta}{2}} \ell(J)^{\frac{p+\delta}{2}}} {  \ell(L)^{p+\delta}} = C 2^{-m \frac{p+\delta}{2}} \leq C \frac{\sqrt{|I|} \sqrt{|J|}}{|L|} 2^{-\frac{(m+n)\delta}{2}},\]
which concludes the proof of the lemma.

\end{proof}

This proves that the operators \(S_{m n}^{\omega}\) appearing in \eqref{expn} are appropriate scalar multiples of \(W\)-Haar shifts. Together with Theorem \ref{mainshift} and the estimates
(\ref{eq:para}) and (\ref{eq:parax}), this gives the sufficiency direction $(1)$ $\Rightarrow$ $(2)$ and the claimed bounds. We now prove necessity, namely  $(2)$ $\Rightarrow$ $(1)$.

First, we show that an \(\ltw\)-bounded \(W\)-Calder\'{o}n-Zygmund operator satisfies the \(W\)-weak boundedness property. If \(I\) is a cube and \(I_i\) is a child of \(I\), then
\begin{align*}
&\big \|\big \langle \langle W \rangle_I^{1/2} T \langle W \rangle_I^{-1/2} \chi_{I_i}, \chi_{I_i} \big \rangle \big \| \\
= &\big \|\big \langle \langle W^{1/2} T W^{-1/2} W^{1/2} \langle W \rangle_I^{-1/2} \chi_{I_i}, W^{-1/2} \langle W \rangle_I^{1/2} \chi_{I_i} \big \rangle \big \| \\
 \leq &\|W^{1/2} T W^{-1/2}\|_{\ltrp \to \ltrp} \| W^{1/2} \langle W \rangle_I^{-1/2} \chi_{I_i}\|_{\ltrp} \| W^{-1/2} \langle W \rangle_I^{1/2} \chi_{I_i}\|_{\ltrp}  \\
 \leq &\|T\|_{\ltw \to \ltw} C_p |I_i|^{1/2} [W]_{A_2}^{1/2} |I_i|^{1/2} \leq C_p [W]_{A_2}^{1/2} |I|. 
\end{align*}

To show that \(T1 \in BMO_W\) if \(T :L^2(W) \to L^2(W)\) is bounded, we first introduce the operator 
\[\widetilde{P}_I f = \langle f, h_I \rangle h_I + \sum_{J \subsetneq I} \langle h_I \rangle_{I_J} \langle f, h_J \rangle h_J,\]
where \(I_J\) is the child of \(I\) containing \(J\). Here \(f\) is a locally integrable \(\mathbb{C}^d\)-valued function, but we will also allow the operator \(\widetilde{P}_I \) to act on locally integrable \(\mathcal{M}_d(\mathbb{C})\)-valued functions. Note that since $W$ is $A_2$, $W^{1/2} \widetilde{P}_I W^{-1/2}$ is bounded on $\ltrp$, with bound independent of $I$. We thus have
\begin{align*}
\|W^{1/2} \widetilde{P}_I T \langle W\rangle^{-1/2}_I h_I\|_{\ltrp} & = \| W^{1/2} \widetilde{P}_I W^{-1/2}W^{1/2}T W^{-1/2} W^{1/2}  \langle W \rangle_I^{-1/2} h_I\|_{\ltrp} \\
& \leq \|W^{1/2}  \widetilde{P}_I T W^{-1/2} \|_{\ltrp \to \ltrp} \|W^{1/2}  \langle W \rangle_I^{-1/2} h_I\|_{\ltrp} \\
& \leq C([W]_{A_2}, p, d) \, \|T\|_{\ltw, \ltw}.
\end{align*}

On the other hand, we can write 
\[W^{1/2} \widetilde{P}_I T \langle W\rangle^{-1/2}_I h_I = W^{1/2} \big \langle T \langle W \rangle_I^{-1/2} h_I, h_I \big \rangle h_I + W^{1/2} \sum_{J \subsetneq I} \langle h_I \rangle_{I_J}  \big \langle T \langle W \rangle_I^{-1/2} h_I, h_J \big \rangle h_J.\]
Using the same splitting as in the proof of the first case (\(\Gamma_{\mathrm{in}}\)) of Lemma \ref{joshlemma}, the terms of the previous sum can be expressed as
\[W^{1/2} \langle W \rangle_I^{-1/2}  \langle h_I \rangle_{I_J}  \big \langle T^I \big( \chi_{I_J^c}(h_I - \langle h_I \rangle_{I_J} ) \big), h_J \big \rangle h_J  + W^{1/2} \langle W \rangle_I^{-1/2} (\langle h_I \rangle_{I_J} )^2 \big \langle T^I 1, h_J \big \rangle h_J.\]
It then follows that
\begin{align*}
W^{1/2} \widetilde{P}_I T \langle W\rangle^{-1/2}_I h_I & = W^{1/2} \langle W \rangle_I^{-1/2} \big \langle T^I h_I, h_I \big \rangle h_I + \sum_{J \subsetneq I} W^{1/2} \langle W \rangle_I^{-1/2}  \langle h_I \rangle_{I_J}  \big \langle T^I \big( \chi_{I_J^c}(h_I - \langle h_I \rangle_{I_J} ) \big), h_J \big \rangle h_J \\
& \qquad + \sum_{J \subsetneq I} W^{1/2} \langle W \rangle_I^{-1/2} \big \langle T^I 1, h_J \big \rangle h_J.
\end{align*}
Since the Haar functions form an unconditional basis in \(\ltw\), we have
\begin{multline}
\left\|W^{1/2} \langle W \rangle_I^{-1/2} \big \langle T^I h_I, h_I \big \rangle h_I + \sum_{J \subsetneq I} W^{1/2} \langle W \rangle_I^{-1/2}  \langle h_I \rangle_{I_J}  \big \langle T^I \big( \chi_{I_J^c}(h_I - \langle h_I \rangle_{I_J} ) \big), h_J \big \rangle h_J\right\|^2_{\ltrp} \\
 \leq C([W]_{A_2}, p, d)  \bigg( \big \| \big \langle T^I h_I, h_I \big \rangle h_I \big\|^2_{\ltrp} +  
 \frac{1}{|I|} \sum_{J \subsetneq I}  \big \| \big \langle T^I \big( \chi_{I_J^c}(h_I - \langle h_I \rangle_{I_J} ) \big), h_J \big \rangle h_J \big \|^2_{\ltrp} \bigg)\\
 \lesssim  C([W]_{A_2}, C_\delta, C_0, C_{WBC},  p, d)  \sum_{J \subseteq I} \frac{\ell(I)^{p + \delta}\ell(J)^{p + \delta}}{D(I, J)^{2(p + \delta)}}\\
\leq C([W]_{A_2}, C_\delta, C_0, C_{WBC},  p, d)   \, \sum_{n=0}^{\infty} 2^{-n(p+\delta)} 2^{np} < \infty.
\end{multline}
Here we have used the estimates in the \(\Gamma_{\mathrm{in}}\) and \(\Gamma_{\mathrm{equal}}\) cases from Lemma \ref{joshlemma}.

Altogether, we obtain that \(\frac{1}{|I|^{1/2}} \sum_{J \subsetneq I} W^{1/2} \langle W \rangle_I^{-1/2} \big \langle T^I 1, h_J \big \rangle h_J\) is bounded in \(\ltrp\) with a bound independent of $I$, 
hence also the \(\frac{1}{|I|^{1/2}} \sum_{J \subseteq I} W^{1/2} \langle W \rangle_I^{-1/2} \big \langle T^I 1, h_J \big \rangle h_J\) are uniformly bounded in \(\ltrp\). But this last sum is equal to 
\begin{multline*}
\frac{1}{|I|^{1/2}} \sum_{J \subseteq I}  W^{1/2}  \big \langle T \langle W \rangle_I^{-1/2}  1, h_J \big \rangle h_J = 
\frac{1}{|I|^{1/2}} W^{1/2}  \sum_{J \subseteq I}   \big \langle T 1, h_J \big \rangle h_J \langle W \rangle_I^{-1/2} \\
 = \frac{1}{|I|^{1/2}} W^{1/2} \big( (T1 - \langle T1 \rangle_I) \big) \chi_I \langle W \rangle_I^{-1/2},
 \end{multline*}
and the uniform \(\ltrp\)-boundedness in $I$ of these functions is exactly the condition from Definition \ref{bmow}. Therefore \(T1 \in BMO_W\). 

\(T^*1 \in BMO_{W^{-1}}\) follows immediately by the same argument, since \(T^*:\ltwi \to \ltwi\) is bounded by (3). This finishes the proof of Theorem \ref{mainthm}, up to proving
Theorem \ref{mainshift}.
\qed

\section{The proof of Theorem  \ref{mainshift}}    \label{four}
%Recall Theorem \ref{mainshift}:
%\begin{theorem}
%	Let \(W\) be a matrix \(A_2^d\) weight and \(S\) be a dyadic \(W\)-Haar shift on \(\mathbb{R}^p\) of complexity $k$. Then
%	\[\|S\|_{\ltw \to \ltw} \leq c \cdot k p d^3 N([W]_{A_2^d}),\]
%	where c is an absolute, positive constant.
%\end{theorem}
We now to the proof of the sharp bound for dyadic $W$-Haar shifts, Theorem \ref{mainshift}.
Following the approach in \cite{Tr11}, one can show that it is enough to consider only dyadic \(W\)-Haar shifts on a dyadic system in \(\mathbb{R}\). This reduction is obtained by arranging the dyadic cubes 
in an appropriate way on the real line (for more details in the matrix-weighted case, see also \cite{PoSt15}).

Let \(W\) be a \(d \times d\) matrix \(A_2^d\) weight on \(\mathbb{R}\). For each \(I \in \dd\), choose an orthonormal basis of eigenvectors  \(B_I = \{e_I^1, e_I^2, \ldots, e_I^d\}\) of \( \langle W \rangle_I\) with associated eigenvalues \( \{\lambda_{I, 1}, \lambda_{I, 2}, \ldots, \lambda_{I, d}\}\) , and let \(P_I^i, 1 \leq i \leq d\), be the corresponding orthogonal projection onto the span of \(e_I^i\). 

Since \(S\) is a \(W\)-Haar shift operator of complexity \(k\), it has the form
\[S f = \sum_{L \in \dd}
\sum_{\substack{
            I \in \mathcal{D}_m(L) \\
            J \in \mathcal{D}_n(L)}}
A^{L}_{I,J} \langle f, h_I \rangle h_J,\]
where \(\left\|\tilde{A}^{L}_{I,J}\right\| := \big \| \langle W \rangle^{1/2}_L A^L_{I, J} \langle W \rangle^{-1/2}_L \big \| \leq \frac{\sqrt{|I|} \sqrt{|J|}}{|L|} =2^{-(m+n)p/2} \).

Let \(f \in \ltw,\  g \in \ltwi\) and \(0 \leq t \leq k-1\) be fixed. For the slice \(S_t\), we can write
\begin{align*}
& \left \langle S_t f,g \right \rangle_{\ltw,\ltwi}  = \Bigg \langle \sum_{L \in \mathcal{L}_j}
\sum_{\substack{
            I \in \mathcal{D}_m(L) \\
            J \in \mathcal{D}_n(L)}}
A^{L}_{I,J} \langle f, h_I \rangle h_J, \sum_{I' \in \dd} \langle g, h_{I'} \rangle h_{I'} \Bigg \rangle _{\ltw,\ltwi}\\
& \qquad = \sum_{L \in  \mathcal{L}_t} \sum_{I' \in \dd}
\sum_{\substack{
            I \in \mathcal{D}_m(L) \\
            J \in \mathcal{D}_n(L)}}
 \big \langle A^{L}_{I,J} \langle f, h_I \rangle , \langle g, h_{I'} \rangle \big \rangle _{\mathbb{C}^d} \langle h_J,h_{I'} \rangle_{L^2(\mathbb{R}),L^2(\mathbb{R})} \\
& \qquad = \sum_{L \in  \mathcal{L}_t}
\sum_{\substack{
		I \in \mathcal{D}_m(L) \\
		J \in \mathcal{D}_n(L)}}
\big \langle A^{L}_{I,J} \langle f, h_I \rangle , \langle g, h_J \rangle \big \rangle _{\mathbb{C}^d} \\
& \qquad = \sum_{L \in  \mathcal{L}_t}
\sum_{\substack{
		I \in \mathcal{D}_m(L) \\
		J \in \mathcal{D}_n(L)}}
\big \langle \langle W \rangle_L ^{-1/2} \left(\langle W \rangle_L ^{1/2} A^{L}_{I,J} \langle W \rangle_L ^{-1/2} \right) \langle W \rangle_L ^{1/2} \langle f, h_I \rangle , \langle g, h_J \rangle \big \rangle _{\mathbb{C}^d}  \\
& \qquad = \sum_{L \in  \mathcal{L}_t}
\sum_{\substack{
		I \in \mathcal{D}_m(L) \\
		J \in \mathcal{D}_n(L)}}
\big \langle \langle W \rangle_L ^{-1/2}  \tilde{A}^{L}_{I,J}  \langle W \rangle_L ^{1/2} \langle f, h_I \rangle , \langle g, h_J \rangle \big \rangle _{\mathbb{C}^d}  \\
& \qquad = \sum_{L \in  \mathcal{L}_t}
\sum_{\substack{
		I \in \mathcal{D}_m(L) \\
		J \in \mathcal{D}_n(L)}}
\big \langle \tilde{A}^{L}_{I,J}  \langle W \rangle_L ^{1/2} \langle f, h_I \rangle , \langle W \rangle_L ^{-1/2} \langle g, h_J \rangle \big \rangle _{\mathbb{C}^d}  \\
& \qquad = \sum_{L \in  \mathcal{L}_t} \sum_{i, j = 1}^d  
\sum_{\substack{
            I \in \mathcal{D}_m(L) \\
            J \in \mathcal{D}_n(L)}}
 \big \langle \tilde{A}^{L}_{I,J} \lambda_{L,i}^{1/2} P_L^i \langle f, h_I \rangle , \lambda_{L,j}^{-1/2} P_L^j \langle g, h_J \rangle \big \rangle _{\mathbb{C}^d} \\
 & \qquad = \sum_{L \in  \mathcal{L}_t} \sum_{i, j = 1}^d  
 \sum_{\substack{
 		I \in \mathcal{D}_m(L) \\
 		J \in \mathcal{D}_n(L)}}
 \big \langle \tilde{A}^{L}_{I,J},  \lambda_{L,j}^{-1/2} P_L^j \langle g, h_J \rangle \otimes \lambda_{L,i}^{1/2} P_L^i \langle f, h_I \rangle \big \rangle _{\mathcal{S}_2} \\
& \qquad \leq C \sum_{L \in  \mathcal{L}_t} \sum_{i, j = 1}^d  
  \sum_{\substack{
  		I \in \mathcal{D}_m(L) \\
  		J \in \mathcal{D}_n(L)}}
  \big \|  \lambda_{L,j}^{-1/2} P_L^j \langle g, h_J \rangle \otimes \lambda_{L,i}^{1/2} P_L^i \langle f, h_I \rangle \big \| _{\mathcal{S}_1} \\
  & \qquad = C \sum_{L \in  \mathcal{L}_j} \sum_{i, j = 1}^d  
  \sum_{\substack{
  		I \in \mathcal{D}_m(L) \\
  		J \in \mathcal{D}_n(L)}}
  \big | \big \langle \lambda_{L,j}^{-1/2} e_L^j \otimes  \lambda_{L,i}^{1/2} e_L^i \langle f, h_I \rangle,  \langle g, h_J \rangle \big \rangle _{\mathbb{C}^d} \big|.
\end{align*}

Since \( \langle W \rangle_L ^{1/2} ( \lambda_{L,j}^{-1/2} e_L^j \otimes  \lambda_{L,i}^{1/2} e_L^i ) \langle W \rangle_L ^{-1/2} = e_L^j \otimes e_L^i\), we have 
\[\left\|\left\{\lambda_{L,j}^{-1/2} e_L^j \otimes  \lambda_{L,i}^{1/2} e_L^i \right\}_{L \in \mathcal{D}} \right \|_{\infty, W} = \sup_{L \in \mathcal{D}} \|  \langle W \rangle_L ^{1/2} ( \lambda_{L,j}^{-1/2} e_L^j \otimes  \lambda_{L,i}^{1/2} e_L^i ) \langle W \rangle_L ^{-1/2} \| =  \sup_{L \in \mathcal{D}} \| e_L^j \otimes e_L^i \| = 1. \]
It follows that

\begin{align*}
&\left \langle S_t f,g \right \rangle_{\ltw,\ltwi} \\
& \leq C \sup_{\substack{
		\sigma = \{\sigma_L\}_{L \in \dd}\\
		\|\sigma\|_{\infty, W} \leq 1}}
\sum_{L \in  \mathcal{L}_t} \sum_{i, j = 1}^d  
\sum_{\substack{
		I \in \mathcal{D}_m(L) \\
		J \in \mathcal{D}_n(L)}}
\big | \big \langle \sigma_L \langle f, h_I \rangle,  \langle g, h_J \rangle \big \rangle _{\mathbb{C}^d} \big|\\
& \leq Cd^2  \sup_{\substack{
		\sigma = \{\sigma_L\}_{L \in \dd}\\
		\|\sigma\|_{\infty, W} \leq 1}}
\sum_{L \in  \mathcal{L}_j} 
\sum_{\substack{
            I \in \mathcal{D}_m(L) \\
            J \in \mathcal{D}_n(L)}}
 \frac{|I|^{1/2}}{2^{k-m}} \frac{|J|^{1/2}}{2^{k-n}} \Bigg \langle
\sum_{\substack{
            P \in \mathcal{D}_k(L) \\
            P \subset I^{+}}}
\sigma_L \big ( \langle f \rangle _P -  \langle f \rangle _L \big ) +
\sum_{\substack{
            P \in \mathcal{D}_k(L) \\
            P \subset I^{-}}}
\sigma_L \big ( \langle f \rangle _L -  \langle f \rangle _P \big ) , \Bigg. \\
& \hspace{6 cm} \Bigg.
\sum_{\substack{
            Q \in \mathcal{D}_k(L) \\
            Q \subset J^{+}}}
 \big( \langle g \rangle _Q -  \langle g \rangle _L \big ) +
\sum_{\substack{
            Q \in \mathcal{D}_k(L) \\
            Q \subset J^{-}}}
 \big ( \langle g \rangle _L -  \langle g \rangle _Q \big )
\Bigg \rangle_{\mathbb{C}^d}.
\end{align*}

We therefore have
\begin{align} \label{est:shift}
& \Big | \left \langle S_j f,g \right \rangle_{\ltw,\ltwi}  \Big|   \\ \nonumber
& \leq C \sup_{\substack{
		\sigma = \{\sigma_L\}_{L \in \dd}\\
		\|\sigma\|_{\infty, W} \leq 1}}
\sum_{L \in  \mathcal{L}_j} |L|
\sum_{P,Q \in \mathcal{D}_k(L)} \bigg | \bigg \langle \sigma_L \bigg (  \frac{\langle f \rangle _P -  \langle f \rangle _L}{2^k} \bigg ),  \bigg (\frac{\langle g \rangle _Q -  \langle g \rangle _L}{2^k}  \bigg )  \bigg \rangle_{\mathbb{C}^d}  \bigg|.  \nonumber
%& \le \sup_{ \|\sigma\|_{\infty, W} \le 1}  d \cdot \left|  \sum_{L \in  \mathcal{L}_j} |L| 
%\sum_{P,Q \in \mathcal{D}_k(L)} \bigg \langle \sigma_L \bigg (  \frac{\langle f \rangle _P -  \langle f \rangle _L}{2^k} \bigg ),  \bigg (\frac{\langle g \rangle _Q -  \langle g \rangle _L}{2^k}  \bigg )  \bigg \rangle_{\mathbb{C}^d}  \right|  .
\end{align}

Using the definition of the martingale transform operator \(T_{\sigma}\), we can write
\begin{align} \label{est:projections}
& \sup_{\substack{
		\sigma = \{\sigma_I\}_{I \in \dd}\\
		\|\sigma\|_{\infty, W} \leq 1}}
\sd \left | \big \langle  \sigma_I \big( \langle f \rangle _{I^+} - \langle f \rangle_{I^-} \big), \big( \langle g \rangle _{I^+} - \langle g\rangle_{I^-} \big) \big \rangle_{\mathbb{C}^d} \right | \cdot |I| \nonumber \\
& 4 \sup_{\substack{
		\sigma = \{\sigma_I\}_{I \in \dd}\\
		\|\sigma\|_{\infty, W} \leq 1}}
\sd \left| \big \langle \sigma_I \langle f,h_I \rangle, \langle g,h_I \rangle \big \rangle_{\mathbb{C}^d} \right| \\   \nonumber
& = 4 \sup_{\substack{
		\sigma = \{\sigma_I\}_{I \in \dd}\\
		\|\sigma\|_{\infty, W} \leq 1}}
\left| \langle T_\sigma f,g \rangle_{\ltw,\ltwi} \right| \\\nonumber
&\leq 4 \sup_{\substack{
		\sigma = \{\sigma_I\}_{I \in \dd}\\
		\|\sigma\|_{\infty, W} \leq 1}}
\|T_{\sigma}\|_{\ltw \to \ltw}  \|f \|_{\ltw} \|g\|_{\ltwi} \\   \nonumber
&\leq 4 N([W]_{A_2^d}) \|f \|_{\ltw} \|g\|_{\ltwi}. \nonumber
\end{align}

The left hand side of this chain of inequalities is what motivates the following definition of the Bellman function associated to our problem. Notice that this Bellman function is exactly the same as the one that appears in \cite{PoSt15}.

Let \(X >1\), fix a dyadic interval \(I_0\) and for \(\fd \in \mathbb{C}^d, \Fd \in \mathbb{R}, \ud \in \mathcal{M}_d(\mathbb{C}), \gd \in \mathbb{C}^d, \Gd \in \mathbb{R}, \vd \in \mathcal{M}_d(\mathbb{C})\) satisfying
\begin{equation}\label{domain}
\ud, \vd >0, I_d \leq \vd^{1/2} \ud \vd^{1/2} \leq X \cdot I_d, \|\vd^{-1/2} \fd\|_{\mathbb{C}^d}^2 \leq \Fd,\ \| \ud^{-1/2} \gd\|_{\mathbb{C}^d}^2 \leq \Gd ,
\end{equation}
define the function \(\fb_X=\fb_X^{I_0}: \mathbb{C}^d \times \mathbb{R} \times \mathcal{M}_d(\mathbb{C}) \times \mathbb{C}^d \times \mathbb{R} \times \mathcal{M}_d(\mathbb{C}),\) by
\begin{equation*}
\fb_X(\fd, \Fd, \ud, \gd, \Gd, \vd):=
 |I_0|^{-1} \sup \sum_{I \subseteq I_0} \left | \big \langle \sigma_I \big ( \langle f \rangle _{I^+} - \langle f \rangle_{I^-} \big ), \langle g \rangle _{I^+} - \langle g\rangle_{I^-}   \big \rangle_{\mathbb{C}^d} \right | \cdot |I|,
%=  |I_0|^{-1} \sup \left| \sum_{I \subseteq I_0}   \langle \sigma_I \big ( \langle f \rangle _{I^+} - \langle f \rangle_{I^-} \big ), \langle g \rangle _{I^+} - \langle g\rangle_{I^-} \big \rangle_{\mathbb{C}^d}  \cdot |I| \right| , 
 \end{equation*}
where the supremum is taken over all functions \(f,g :\mathbb{R} \to \mathbb{C}^d,\) all matrix  \(A_2^d\) weights \(W\) on \(I_0,\) and all sequences of \(d \times d\) matrices \(\sigma = \{\sigma_I\}_{I \in \mathcal{D}}\) such that
\begin{equation}\label{supdomain}
\langle f \rangle _{I_0}=\fd \in \mathbb{C}^d, \quad \big \langle \|W^{1/2} f\|^2_{\mathbb{C}^d} \big \rangle_{I_0} = \Fd \in \mathbb{R}, \quad \langle g \rangle _{I_0}=\gd \in \mathbb{C}^d, \quad \big \langle \|W^{-1/2} g\|^2_{\mathbb{C}^d} \big \rangle_{I_0} = \Gd \in \mathbb{R}, 
\end{equation}
\begin{equation}\label{weightsigmadomain}
\sup_{\substack {
I \in \mathcal{D} \\
I \subset I_0} }
\|\langle W \rangle_I ^{1/2} \langle W^{-1} \rangle_I ^{1/2} \|^2 \leq X, \quad \langle W \rangle _{I_0} = \ud, \quad \langle W^{-1} \rangle_{I_0} = \vd, \quad \|\sigma\|_{\infty, W} \leq 1.
\end{equation}

The Bellman function \(\fb_X\) has the following properties:
\begin{enumerate}[(i)]
    \item (Domain) The domain \(\mathfrak{D}_X:=\mathrm{Dom}\, \fb_X\) is given by \eqref{domain}. This means that for every tuple \((\fd, \Fd, \ud, \gd, \Gd, \vd)\) that satisfies \eqref{domain}, there exist functions \(f, g\) and a matrix weight \(W\) such that \eqref{supdomain} holds, so the supremum is not \(-\infty\). Conversely, if the variables \(\fd, \Fd, \ud, \gd, \Gd, \vd\) are the corresponding averages of some functions \(f, g\) and \(W\), then they must satisfy condition \eqref{domain}. Since the set \(\{(\ud,\vd) \in \mathcal{M}_d(\mathbb{C}) \times \mathcal{M}_d(\mathbb{C}): \ud,\vd >0, I_d \leq \vd^{1/2} \ud \vd^{1/2} \leq X \cdot I_d\}\) is not convex, the domain \(\mathfrak{D}_X\) is not convex either. 
   \item (Range) \(0 \leq \fb_X(\fd, \Fd, \ud, \gd, \Gd, \vd) \leq 4  N(X) \Fd^{1/2} \Gd^{1/2}\) for all \((\fd, \Fd, \ud, \gd, \Gd, \vd) \in \mathfrak{D}_X.\)
    \item (Concavity condition) Consider all tuples \(A=(\fd, \Fd, \ud, \gd, \Gd, \vd), A_+=(\fd_+, \Fd_+, \ud_+, \gd_+, \Gd_+, \vd_+)\) and \(A_-=(\fd_-, \Fd_-, \ud_-, \gd_-, \Gd_-, \vd_-)\) in \(\mathfrak{D}_X\) such that \(A=(A_+ + A_-)/2\). 
    For all such tuples, we have the following concavity condition:
        \[\fb_X(A) \geq \frac{\fb_X(A_+)+\fb_X(A_-)}{2} + 
             \sup_{\| \tau\|_{\ud} \le 1 } \left| \left \langle  \tau (\fd_+ - \fd_- ), \gd_+ - \gd_-  \right \rangle_{\mathbb{C}^d}  \right| .\]
  \end{enumerate}
Here, the supremum is taken over all $d \times d$ matrices $\tau$ with $\|\tau\|_{\ud} := \|\ud^{1/2} \tau \ud^{-1/2}\| \le 1$.

More details about these properties can be found in \cite{PoSt15}.

We can now state the main tool for the proof of Theorem \ref{mainshift}.

\begin{lemma}\label{mainlemma}
Let \(X>1\) and \(\fb_X\) be a function satisfying properties (i)-(iii) from above. Fix \(k \geq 1\) and a dyadic interval \(I_0\). For all \(I \in \mathcal{D}_n(I_0),\ 0 \leq n \leq k,\) let the points \(A_I= (\fd_I, \Fd_I, \ud_I, \gd_I, \Gd_I, \vd_I) \in \mathfrak{D}_X=\mathrm{Dom}\, \fb_X\) be given. Assume that the points \(A_I\) satisfy the dyadic martingale dynamics, i.e. \(A=(A_{I^+}+A_{I^-})/2,\) where \(I^+\) and \(I^-\) are the children of \(I\).  Let \(\sigma_{I_0}\) be a \(d \times d\) matrix such that \( \|\ud_{I_0}^{1/2} \sigma_{I_0} \ud_{I_0}^{-1/2}\| \leq 1\).
For \(K,L \in \mathcal{D}_k(I_0)\), we define the coefficients \(\lambda_{KL}\) by
\[\lambda_{KL} := \bigg \langle \sigma_{I_0} \bigg ( \frac{\fd_K - \fd_{I_0}}{2^k} \bigg ),  \bigg ( \frac{\gd_L - \gd_{I_0}}{2^k} \bigg ) \bigg \rangle_{\mathbb{C}^d}  .\]

Then
\[ \sum_{K,L \in \mathcal{D}_k({I_0})} |\lambda_{KL}| \leq c \cdot d \bigg( \fb_{X'}(A_{I_0}) - 2^{-k} \sum_{I \in \mathcal{D}_k(I_0)} \fb_{X'}(A_I) \bigg),\]
where \(c\) is a positive absolute constant and \(X'=\frac{100}{9}X\).
\end{lemma}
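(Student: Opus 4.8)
The plan is to squeeze both sides of the claimed inequality against the same square-function-type quantity: the left-hand side is bounded above by it through Cauchy–Schwarz and orthogonality of martingale differences, and the Bellman difference on the right is bounded below by it through a \emph{refined} iteration of the concavity property (iii). Throughout set $v_I := \fd_{I^+}-\fd_{I^-}$ and $w_I := \gd_{I^+}-\gd_{I^-}$ for $I \in \dd_n(I_0)$, $0\le n\le k-1$; telescoping along branches gives $\fd_K-\fd_{I_0}=\sum_{n=0}^{k-1}\varepsilon_n(K)\tfrac12 v_{I_n(K)}$ for $K\in\dd_k(I_0)$, where $I_n(K)$ is the ancestor of $K$ in generation $n$ and $\varepsilon_n(K)\in\{\pm1\}$, and similarly for $g$.

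\emph{Upper bound for $\Sigma:=\sum_{K,L\in\dd_k(I_0)}|\lambda_{KL}|$.} Since $\|\ud_{I_0}^{1/2}\sigma_{I_0}\ud_{I_0}^{-1/2}\|\le1$ yields only $|\langle\sigma_{I_0}a,b\rangle_{\C^d}|\le\|\ud_{I_0}^{1/2}a\|\,\|\ud_{I_0}^{-1/2}b\|$, I would first write
\[
\Sigma\;\le\;2^{-2k}\Big(\sum_{K\in\dd_k(I_0)}\!\!\big\|\ud_{I_0}^{1/2}(\fd_K-\fd_{I_0})\big\|\Big)\Big(\sum_{L\in\dd_k(I_0)}\!\!\big\|\ud_{I_0}^{-1/2}(\gd_L-\gd_{I_0})\big\|\Big),
\]
then apply Cauchy–Schwarz in the sum over the $2^k$ cubes $K$ and the orthogonality of the dyadic martingale differences of $\fd$ in $L^2(I_0;\C^d)$ computed with the fixed inner product $\langle\ud_{I_0}\,\cdot\,,\,\cdot\rangle$, obtaining $\sum_K\|\ud_{I_0}^{1/2}(\fd_K-\fd_{I_0})\|\le\tfrac{2^k}{2}\mathcal E_f(I_0)^{1/2}$ with $\mathcal E_f(I_0):=\sum_{n=0}^{k-1}2^{-n}\sum_{I\in\dd_n(I_0)}\|\ud_{I_0}^{1/2}v_I\|^2$, and likewise $\mathcal E_g(I_0)$ with $\ud_{I_0}^{-1/2}$ and $w_I$. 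This gives
\[
\Sigma\;\le\;\tfrac14\,\mathcal E_f(I_0)^{1/2}\,\mathcal E_g(I_0)^{1/2}.
\]
Either here (decomposing with respect to the eigenprojections of $\ud_{I_0}$ and arguing one eigendirection at a time) or in the second step below, one loses a single factor $d$ in the passage between matrix- and scalar-valued quantities.

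\emph{Lower bound for the Bellman difference.} Telescoping over the tree and one application of (iii) at each node already give
\[
\fb_{X'}(A_{I_0})-2^{-k}\!\!\sum_{I\in\dd_k(I_0)}\!\!\fb_{X'}(A_I)\;\ge\;\sum_{n=0}^{k-1}2^{-n}\!\!\sum_{I\in\dd_n(I_0)}\!\!\big\|\ud_I^{1/2}v_I\big\|\,\big\|\ud_I^{-1/2}w_I\big\|,
\]
using $\sup_{\|\tau\|_{\ud_I}\le1}|\langle\tau v_I,w_I\rangle|=\|\ud_I^{1/2}v_I\|\,\|\ud_I^{-1/2}w_I\|$. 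This ``diagonal'' sum can, however, be strictly smaller than $\mathcal E_f(I_0)^{1/2}\mathcal E_g(I_0)^{1/2}$ and is not enough. The key is that the splitting $A=(A_++A_-)/2$ in (iii) need not be the dyadic one: iterating (iii) with virtual splits chosen, at each generation, to spend the available second-moment budget on the $\fd$- and $\gd$-variations and to align them (the optimal alignment being realised by a rank-one $\tau$ in the eigenbasis of the relevant $\ud$), one upgrades the right-hand side to a constant times $\mathcal E_f(I_0)^{1/2}\mathcal E_g(I_0)^{1/2}$. The range bound (ii) keeps the subtracted terms $2^{-k}\sum_I\fb_{X'}(A_I)$ under control during the iteration, and the need to realise the prescribed averages $\langle W\rangle_I=\ud_I$, $\langle W^{-1}\rangle_I=\vd_I$ by an honest matrix weight whose $A_2^d$-characteristic over all of its own subintervals stays bounded is exactly what forces the enlargement $X'=\tfrac{100}{9}X$. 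Combining the two displays (and comparing $\ud_{I_0}$- and $\ud_I$-weights via $I_d\le\vd_J^{1/2}\ud_J\vd_J^{1/2}\le XI_d$ and the martingale dynamics where needed) yields $\Sigma\le c\,d\,\big(\fb_{X'}(A_{I_0})-2^{-k}\sum_{I\in\dd_k(I_0)}\fb_{X'}(A_I)\big)$.

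The main obstacle is exactly this upgrade. One cannot get away with the naive iteration of (iii), which extracts only the diagonal martingale-transform sum: that sum genuinely fails to dominate $\Sigma$, for instance when the $\fd$-differences are concentrated at coarse scales and the $\gd$-differences at fine scales, so one must coax a square-function-type lower bound out of the concavity by choosing favourable virtual splits. This is inseparable from the non-commutativity of matrix weights, which prevents the $\langle W\rangle_{I_0}$- and $\langle W\rangle_I$-weightings from cancelling as in the scalar case, and which is responsible both for the dimensional factor $d$ and for the passage from $X$ to $X'$.
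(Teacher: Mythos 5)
Your proposal correctly identifies the shape of the argument (Cauchy--Schwarz and martingale orthogonality for an upper bound on $\Sigma:=\sum_{K,L}|\lambda_{KL}|$, telescoping of the concavity (iii) for a lower bound on the Bellman drop), and you have put your finger on exactly the right obstruction: the naive telescoping of (iii) produces only the ``diagonal'' sum $\sum_n 2^{-n}\sum_{I\in\dd_n(I_0)}\|\ud_I^{1/2}v_I\|\,\|\ud_I^{-1/2}w_I\|$, which does not dominate $\Sigma$. However, the step you propose to bridge this gap is not a proof, and one of your auxiliary claims is actually false.

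The most serious concrete problem is the mismatch of weights. Your upper bound for $\Sigma$ is phrased entirely in terms of $\ud_{I_0}$-norms, $\mathcal E_f(I_0)=\sum_n 2^{-n}\sum_{I\in\dd_n}\|\ud_{I_0}^{1/2}v_I\|^2$, while (iii), applied at a node $I$, necessarily produces the \emph{local} quantity $\|\ud_I^{1/2}v_I\|\,\|\ud_I^{-1/2}w_I\|$, since the supremum in (iii) is taken over $\|\tau\|_{\ud}\le 1$ with $\ud$ the second-moment coordinate of the \emph{midpoint} $A$. You say at the end that the two can be reconciled by ``comparing $\ud_{I_0}$- and $\ud_I$-weights via $I_d\le\vd_J^{1/2}\ud_J\vd_J^{1/2}\le X I_d$ and the martingale dynamics where needed,'' but this does not work. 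The martingale dynamics $\ud_I=(\ud_{I^+}+\ud_{I^-})/2$ only give $\ud_J\le 2^n\ud_{I_0}$ for $J\in\dd_n(I_0)$, with no matching lower bound unless one also invokes the $A_2$ condition, and chaining these gives at best $\|\ud_{I_0}^{1/2}v_J\|\,\|\ud_{I_0}^{-1/2}w_J\|\lesssim 2^n X^{1/2}\,\|\ud_J^{1/2}v_J\|\,\|\ud_J^{-1/2}w_J\|$. The factor $2^n$ annihilates the crucial $2^{-n}$ weight in $\mathcal E_f$, and the factor $X^{1/2}$ is incompatible with the Lemma's conclusion, which has an absolute constant $c$ and the fixed ratio $X'/X=100/9$ independent of $k$ and of $X$. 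So the reconciliation you appeal to is not available.

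Second, the claimed ``upgrade'' of the diagonal Bellman bound to a square-function bound via ``virtual splits chosen, at each generation, to spend the available second-moment budget on the $\fd$- and $\gd$-variations and to align them'' is not a proof. You do not specify what the virtual split points are, why they belong to $\mathfrak D_{X'}$ (this is a genuine constraint, since $\mathfrak D_X$ is not convex and the membership conditions couple $\fd,\Fd,\ud,\vd$), how the iteration is forced to terminate at the \emph{prescribed} points $A_I$ in generation $k$ (the Lemma's right-hand side has the true $\fb_{X'}(A_I)$, so the endpoints are fixed), or how the contributions accumulate precisely to $\mathcal E_f^{1/2}\mathcal E_g^{1/2}$. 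This is the whole content of the Lemma; the paper defers it to the proof in \cite{PoSt15}, which adapts Treil's scalar argument \cite{Tr11} and is not a simple re-telescoping of (iii). Finally, the factor $d$ on the right-hand side of the Lemma is introduced somewhere in this part of the argument, and your proposal does not track where it comes from — you only gesture at ``decomposing with respect to the eigenprojections of $\ud_{I_0}$,'' which is plausible but unsubstantiated.

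In short: the upper bound for $\Sigma$ is fine as a first step, and the diagnosis of the difficulty is accurate, but the central step — extracting from (iii), with an absolute constant and a bounded enlargement $X\to X'$, a lower bound for the Bellman drop that actually dominates your $\mathcal E_f^{1/2}\mathcal E_g^{1/2}$ — is asserted, not argued, and the bridge you propose to cross the $\ud_{I_0}$-versus-$\ud_I$ gap quantitatively fails.
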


The proof of this lemma follows exactly as in \cite{PoSt15}, the only difference being the use of the matrix \(\sigma_{I_0}\) instead of the projection \(P_{I_0}\).

We are now ready to finish the proof of Theorem \ref{mainshift}.

Recall that for all slices \(S_t\) of \(S\) we have
\[ \Big | \left \langle S_t f,g \right \rangle_{\ltw,\ltwi} \Big|  \leq C d^2 \sup_{\substack{
		\sigma = \{\sigma_I\}_{I \in \dd}\\
		\|\sigma\|_{\infty, W} \leq 1}}
\sum_{L \in  \mathcal{L}_t} |L| 
\sum_{P,Q \in \mathcal{D}_k(L)} \bigg | \bigg \langle \sigma_L \bigg (  \frac{\langle f \rangle _P -  \langle f \rangle _L}{2^k} \bigg ),  \frac{\langle g \rangle _Q -  \langle g \rangle _L}{2^k}    \bigg \rangle_{\mathbb{C}^d}  \bigg|. \]

Let \(X:= [W]_{A_2^d}\); fix \(0 \leq t \leq k-1 \) and for all \(I \in \mathcal{L}_t\) define
\[A_I:=\Big (\langle f \rangle _I, \big \langle \|W^{1/2}f\|^2_{\mathbb{C}^d}  \big \rangle_I, \langle W \rangle _I, \langle g \rangle _I, \big \langle \|W^{-1/2}g\|^2_{\mathbb{C}^d} \big \rangle_I, \langle W^{-1} \rangle _I \Big).\]
Notice that all these points are in \(\mathrm{Dom}\, \fb_X=\mathfrak{D}_X\). Lemma \ref{mainlemma} says that
\begin{align*}
& |L| 
\sum_{P,Q \in \mathcal{D}_k(L)} \bigg | \bigg \langle \sigma_L \bigg (  \frac{\langle f \rangle _P -  \langle f \rangle _L}{2^k} \bigg ), \frac{\langle g \rangle _Q -  \langle g \rangle _L}{2^k}   \bigg \rangle_{\mathbb{C}^d}  \bigg| \\
& \qquad \qquad \leq c \cdot d \bigg( |L| \fb_{X'}(A_L) - \sum_{I \in \mathcal{D}_k(L)} |I| \fb_{X'}(A_I) \bigg ),
\end{align*}
for all \(L \in \mathcal{L}_t\) and all \(d \times d\) matrices \(\sigma_L\) such that \(\| \langle W \rangle _L^{1/2} \sigma_L \langle W \rangle _L^{-1/2}\| \leq 1\).  We write this estimate for each \(I \in \mathcal{D}_k(L)\) and then iterate the procedure \(\ell\) times to obtain
\begin{align*}
&  \sum_{\substack{
                    I \in \mathcal{L}_t \\
                    I \subseteq L\\
                    |I| > 2^{-k \ell}|L| }}
|I|  \sum_{P,Q \in \mathcal{D}_k(I)} \bigg | \bigg \langle  \sigma_L \bigg ( \frac{\langle f \rangle _P -  \langle f \rangle _I}{2^k} \bigg), \frac{\langle g \rangle _Q -  \langle g \rangle _I}{2^k}     \bigg \rangle_{\mathbb{C}^d} \bigg|  \\
& \qquad \qquad \qquad \leq c \cdot d  \bigg( |L| \fb_{X'}(A_L) - \sum_{I \in \mathcal{D}_{k \ell}(L)} |I| \fb_{X'}(A_I) \bigg ) \\
& \qquad \qquad \qquad \leq c \cdot d N(X') |L| \big \langle \|W^{1/2}f\|^2_{\mathbb{C}^d}  \big \rangle_L^{1/2} \big \langle \|W^{-1/2}g\|^2_{\mathbb{C}^d}  \big \rangle_L^{1/2} \\
& \qquad \qquad \qquad \leq c \cdot d N(X') \|f \chi_L\|_{\ltw}  \|g \chi_L\|_{\ltwi},
\end{align*}
where the second inequality follows from property (ii) of the Bellman function.
\par Letting \(\ell \to \infty\), we have
\begin{align*}
&  \sum_{\substack{
                    I \in \mathcal{L}_t \\
                    I \subseteq L }}
|I| \sum_{P,Q \in \mathcal{D}_k(I)} \bigg | \bigg \langle \sigma_L \bigg ( \frac{\langle f \rangle _P -  \langle f \rangle _I}{2^k} \bigg), \ \frac{\langle g \rangle _Q -  \langle g \rangle _I}{2^k}     \bigg \rangle_{\mathbb{C}^d} \bigg|  \\
& \qquad \qquad \qquad \leq c \cdot d N(X') \|f \chi_L\|_{\ltw}  \|g \chi_L\|_{\ltwi}.
\end{align*}

We now cover the real line with intervals \(L \in \mathcal{L}_t\) of length \(2^M\) and apply the last inequality to each \(L\) to obtain that
\begin{align*}
&  \sup_{\substack{
		\sigma = \{\sigma_I\}_{I \in \dd}\\
		\|\sigma\|_{\infty, W} \leq 1}}
 \sum_{\substack{
                    I \in \mathcal{L}_t \\
                    |I| \leq 2^M }}
|I| \sum_{P,Q \in \mathcal{D}_k(I)} \bigg | \bigg \langle  \sigma_L \bigg ( \frac{\langle f \rangle _P -  \langle f \rangle _I}{2^k} \bigg),  \frac{\langle g \rangle _Q -  \langle g \rangle _I}{2^k}     \bigg \rangle_{\mathbb{C}^d} \bigg|  \\
& \qquad \qquad \qquad \leq c \cdot d N(X) \|f\|_{\ltw}  \|g\|_{\ltwi}.
\end{align*}

When \(M \to \infty\), we get that the norm of \(S_t\) is bounded by \(c \cdot d^3 N(X)\). Since \(S\) was decomposed into \(k\) slices, 
it follows that the operator norm of \(S\) is bounded by \(c \cdot k d^3  N([W]_{A_2^d})\), and therefore the proof of Theorem \ref{mainshift} (and therefore also of the sufficiency in Theorem \ref{mainthm}) is complete.
\qed

\bibliographystyle{plain}

\begin{bibsection}
\begin{biblist}

\bib{BiPeWi14}{article}{
  author = {K. Bickel},
  author = {S. Petermichl}, 
  author = {B. Wick},
  title = {Bounds for the {H}ilbert transform with matrix {$A_2$} weights},
  journal = {J. Funct. Anal.},
  year = {2016},
  volume = {270},
  pages = {1719-1743},
  number = {5},
  owner = {Andrei},
  timestamp = {2016.05.02}
}

\bib{CoFe74}{article}{
  author = {R. R. Coifman},
  author = {C. Fefferman},
  title = {Weighted norm inequalities for maximal functions and singular integrals},
  journal = {Studia Math.},
  year = {1974},
  volume = {51},
  pages = {241-250},
  owner = {Andrei},
  timestamp = {2013.02.28}
}
%
%\bib{CuTr15}{article}{
%author={A. Culiuc},
%author={S. Treil},
%title={The Carleson Embedding Theorem with Matrix Weights},
%year = {2015},
%volume={Preprint, arXiv:1508.01716},
%owner = {Andrei},
%timestamp = {2015.06.29}
%
%}
%

\bib{Go03}{article}{
  author = {M. Goldberg},
  title = {Matrix {$A_p$} weights via maximal functions},
  journal = {Pacific J. Math.},
  year = {2003},
  volume = {211},
  pages = {201-220},
  number = {2},
  owner = {Andrei},
  timestamp = {2015.06.29}
}

\bib{HaHy16}{article}{
  author = {T. S. H{\"a}nninen},
  author = {T. P. Hyt{\"o}nen},
  title = {Operator-valued dyadic shifts and the {$T(1)$} theorem},
  year = {2016},
  volume = {180},
  pages={213 -- 253}
  number={2}
  journal={Monatsh. Math. }
  owner = {Andrei},
  timestamp = {2015.08.21}
}
\bib{HukTrVo00}{article}{
author = {S. Hukovic},
author = {S. Treil},
author =  {A. Volberg},
  title = { },
  volume = {},
  pages = {},
  number = {},
  owner = {Andrei},
  timestamp = {2015.06.29}
}

\bib{HuMuWh73}{article}{
  author = {R. A. Hunt},
  author = {B. Muckenhoupt}, 
  author = {R. L. Wheeden},
  title = {Weighted norm inequalities for the conjugate function and {H}ilbert
	transform},
  journal = {Trans. Amer. Math. Soc.},
  year = {1973},
  volume = {176},
  pages = {227 -- 251},
  owner = {Andrei},
  timestamp = {2013.02.28}
}

\bib{Hy12a}{article}{
  author = {T. P. Hyt{\"o}nen},
  title = {The sharp weighted bound for general {C}alder\'{o}n-{Z}ygmund operators},
  journal = {Ann. of Math. (2)},
  year = {2012},
  volume = {175},
  pages = {1473-1506},
  number = {3},
  owner = {Andrei},
  timestamp = {2013.03.01}
}

\bib{Hy11}{article}{
  author = {T. P. Hyt{\"o}nen},
  title = {Representation of singular integrals by dyadic operators, and the
	{$A_2$} theorem},
  year = {2011},
  volume = {Preprint, arXiv:1108.5119},
  journal={Lecture notes of an intensive course at Universidad de Sevilla, Summer 2011},
  owner = {Andrei},
  timestamp = {2013.03.01}
}

\bib{HyPer13}{article}{
  author = {T. P. Hyt{\"o}nen},
  author ={C. Perez},
  title = {Sharp weighted bounds involving $A_\infty$},
  year = {2013},
  journal={Analysis and PDE}
  pages = {777 -- 818},
  number={4}
  owner = {Andrei},
  timestamp = {2013.03.01}
}

\bib{HyPeVo17}{article}{
  author = {T. Hyt\"onen},
  author = {S. Petermichl},
  author =  {A. Volberg},
  title = {The sharp square function estimate with matrix weight},
  journal = {},
  year = {2017},
  volume = {Preprint,  arXiv:1702.04569},
  pages = {},
  number = {},
  owner = {Andrei},
  timestamp = {2015.06.29}
}

\bib{Is15}{article}{
author={J. Isralowitz},
title={A matrix weighted T$_1$ theorem for matrix kernelled CZOs and a matrix weighted John-Nirenberg theorem},
year={2015},
volume={Preprint, arXiv:1508.02474}
owner = {Andrei},
timestamp = {2015.08.21}

}

\bib{IsKwPo14}{article}{
author={J. Isralowitz}, 
author={H.-K. Kwon}, 
author={S.Pott},
title={Matrix-weighted norm inequalities for commutators and paraproducts with matrix symbols},
year={2015},
volume={Preprint, arXiv:1507.04032}
 journal = {to appear in J. London Math. Soc.},
}

\bib{hunt}{article}{
  author = {F. Nazarov},
  author = {S. Treil},
  title = {The hunt for a Bellman function: applications to estimates for singular integral operators and to other classical problems of harmonic analysis},
  journal = {St. Petersburg Math. J.},
  year = {1997},
  volume = {8},
  pages = {721--824},
  number = {5},
  owner = {Andrei},
  timestamp = {}
}
\bib{NaTrVo03}{article}{
author = {F. Nazarov},
author = {S. Treil},
author =  {A. Volberg},
  title = {The Tb-theorem on non-homogeneous spaces},
  journal={Acta Mathematica}
  volume = {190},
  pages = {151 - 239},
  number = {2},
  year={2003}
  owner = {Andrei},
  timestamp = {2015.06.29}
}

\bib{NaPeTrVo17}{article}{
author = {F. Nazarov},
author = {S. Petermichl},
author = {S. Treil},
author =  {A. Volberg},
  title = {Convex body domination and weighted estimates with matrix weights },
  volume = {arXiv:1701.01907 [math.CA]},
  pages = {},
  number = {},
  year={2017}
  owner = {Andrei},
  timestamp = {2015.06.29}
}

\bib{Pe07}{article}{
  author = {S. Petermichl},
  title = {The sharp bound for the {H}ilbert transform on weighted {L}ebesgue
	spaces in terms of the classical {$A_p$} characteristic},
  journal = {Amer. J. Math.},
  year = {2007},
  volume = {129},
  pages = {1355-1375},
  number = {5},
  owner = {Andrei},
  timestamp = {2013.02.28}
}

\bib{PePo02}{article}{
  author = {S. Petermichl},
  author = {S. Pott},
  title = {An estimate for weighted {H}ilbert transform via square functions},
  journal = {Trans, Amer. Math. Soc.},
  year = {2002},
  volume = {354},
  pages = {1699-1703},
  number = {4},
  owner = {Andrei},
  timestamp = {2015.08.21}
}

\bib{PoSt15}{article}{
  author = {S. Pott},
  author = {A. Stoica},
  title = {Bounds for Calder\'{o}n-Zygmund operators with matrix \(A_2\) weights},
  journal = {accepted by Bull. Sci. Math.},
  year = {2015},
  volume = {arXiv:1508.06408 [math.CA]},
  pages = {},
  number = {},
  owner = {},
  timestamp = {}
}

\bib{PeVo02}{article}{
  author = {S. Petermichl},
  author =  {A. Volberg},
  title = {Heating of the {A}hlfors-{B}eurling operator: weakly quasiregular
	maps on the plane are quasiregular},
  journal = {Duke Math. J.},
  year = {2002},
  volume = {112},
  pages = {281-305},
  number = {2},
  owner = {Andrei},
  timestamp = {2013.02.28}
}

\bib{Tr11}{article}{
  author = {S. Treil},
  title = {Sharp {$A_2$} estimates of {H}aar shifts via {B}ellman function},
  year = {2011},
  volume = {Preprint, arXiv:1105.2252},
  owner = {Andrei},
  timestamp = {2013.03.01}
}

\bib{TrVo97}{article}{
  author = {S. Treil},
  author = {A. Volberg},
  title = {Wavelets and the {A}ngle between {P}ast and {F}uture},
  journal = {J. Funct. Anal.},
  year = {1997},
  volume = {143},
  pages = {269-308},
  number = {2},
  owner = {Andrei},
  timestamp = {2015.06.29}
}

\bib{Vo97}{article}{
  author = {A. Volberg},
  title = {Matrix {$A_p$} weights via $S$-functions},
  journal = {J. Amer. Math. Soc.},
  year = {1997},
  volume = {10},
  pages = {445-466},
  number = {2},
  owner = {Andrei},
  timestamp = {2015.06.29}
}

\bib{Wil08}{book}{
  author = {M. Wilson},
  title = {Weighted Littlewood-Paley theory and exponential-square integrability},
  series = {Springer Lecture Notes in Mathematics},
  year = {2008},
  volume = {1924},
  place={Berlin}
  pages = {},
  number = {},
  owner = {},
  timestamp = {}
}

\bib{Wi00}{article}{
  author = {J. Wittwer},
  title = {A sharp estimate on the norm of the martingale transform},
  journal = {Math. Res. Lett.},
  year = {2000},
  volume = {7},
  pages = {1-12},
  number = {1},
  owner = {Andrei},
  timestamp = {2013.02.28}
}

\end{biblist}
\end{bibsection}
\end{document}